\documentclass[12pt]{amsart} 
\usepackage{amsmath,amssymb,amsthm,mathrsfs}
\usepackage[english]{babel}
\usepackage[utf8]{inputenc}

\usepackage{color}
\usepackage{graphicx}
\def\r{\mathbb R}

\def\W{\mathsf W}
\def\T{\mathcal T}

\newtheorem{theorem}{Theorem}[section]
\newtheorem{proposition}[theorem]{Proposition}
\newtheorem{corollary}[theorem]{Corollary}

 \theoremstyle{definition}
 \newtheorem{definition}[theorem]{Definition}
\newtheorem{example}[theorem]{Example}
\newtheorem{remark}[theorem]{Remark}
 \newtheorem{convention}[theorem]{Convention}
\begin{document}

\title[Curves in $\mathbb{R}^{2}$ endowed with a canonical linear connection]{Curvatures of curves in $\mathbb{R}^{2}$ endowed with a canonical linear connection}
\author{Muhittin Evren Aydin}
\address{Department of Mathematics, Faculty of Science, Firat University. 23200 Elazig, Turkey.}
\email{meaydin@firat.edu.tr}
\address{Department of Geometry and Topology. University of Granada.   18071 Granada, Spain}
\email{rcamino@ugr.es}
\author{Rafael L\'opez}

\begin{abstract}
We investigate the geometry of plane curves in $\r^2$ endowed with a canonical linear connection. By introducing the notions of the tangential and geodesic curvatures, we prove the existence and uniqueness theorems for curves with prescribed curvatures. We find all curves with constant curvature and we generalize the curves whose curvatures are linear functions of the arc length parameter.  
\end{abstract}

\keywords{Semi-symmetric connection; metric connection; non-metric connection, tangential curvature, geodesic curvature.}
\subjclass[2020]{Primary 53A04; Secondary 53B05, 53C22}
\maketitle

\section{Introduction and main results}

The study of linear connections on Riemannian manifolds that deviate from the classical Levi-Civita connection has a rich history in differential geometry. A prominent role is played by semi-symmetric connections, introduced by Friedmann and Schouten \cite{fs} and later systematically developed by Yano \cite{ya}. 

Let $(M, \langle,\rangle)$ be a Riemannian manifold and let $\nabla$ be a linear connection on $M$. The torsion tensor $\T$ of $\nabla$ is defined by
$$\T(X,Y)=\nabla_{X}Y-\nabla_{Y}X-[X,Y], \quad X,Y\in\mathfrak{X}(M),$$
where $[\cdot,\cdot]$ is the Lie bracket of vector fields. We say that $\nabla$ is {\it symmetric} if $\T=0$ identically. Moreover, the connection  $\nabla$ is said to {\it semi-symmetric} if there is a nonzero vector field $\W\in\mathfrak{X}(M)$ such that
$$\T(X,Y)=\langle \W,Y\rangle X-\langle \W,X\rangle Y, $$
for all $  X,Y\in\mathfrak{X}(M)$. If, in addition, $\nabla\langle,\rangle=0$ (respectively, $\nabla\langle,\rangle\ne0$) then $\nabla$ is called a {\it semi-symmetric metric connection} (resp, {\it semi-symmetric non-metric connection}). 

The notion of a semi-symmetric metric connection was introduced by  Hayden \cite{hay} whereas that of a non-metric  was introduced by Agashe and Chafle \cite{ac}. Over the last decades, these connections have been deeply investigated in relation to hypersurfaces, submanifolds, and curvature inequalities (see, e.g., \cite{im, lyl,mc1,na}).

We assume that the ambient space $M$ is the Euclidean plane $\r^{2}$, equipped with the Euclidean metric $\langle,\rangle$ and let  $\W\in\mathfrak{X}(\r^2)$. Let $m, n$ be smooth functions on $\r^{2}$, not both identically zero. In this paper, we consider the linear connection
\begin{equation}\label{e1}
\nabla_{X}Y=\nabla_{X}^{0}Y+m\langle \W,Y\rangle X+n\langle X,Y\rangle \W, \quad X,Y\in\mathfrak{X}(\r^{2}),
\end{equation}
where $\nabla^{0}$ denotes the Levi-Civita connection on $\r^{2}$ associated with the metric $\langle,\rangle$. Recall that the torsion tensor $\T^{0}$ of $\nabla^{0}$ satisfies $\T^{0}=0$ and $\nabla^{0}\langle,\rangle=0$. Moreover, if $m=n=0$, then $\nabla=\nabla^0$.

The connection given by \eqref{e1} is a special case of the connection considered in \cite[Equation (2.4)]{trip}, which corresponds  to the choices 
$$\varphi_1=-m\mathrm{Id}, \quad
\varphi=0, \quad
U=-U_2=-\W, \quad
f_1=0, \qquad
f_2=-(m+n).
$$

A direct calculation shows that the linear connection $\nabla$ given by \eqref{e1} and its torsion tensor $\T$ satisfy the following relations:
\begin{equation}\label{e2}
\begin{aligned}
\T(X,Y) &= m(\langle \W,Y\rangle X-\langle \W,X\rangle Y), \\
(\nabla_{X}\langle,\rangle)(Y,Z) &= -(m+n)(\langle X,Y\rangle\langle \W,Z\rangle+\langle X,Z\rangle\langle \W,Y\rangle),
\end{aligned}
\end{equation}
for all $X, Y, Z\in\mathfrak{X}(M)$. Hence, the connection $\nabla$ given by \eqref{e1} is
\begin{enumerate} 
\item[(i)] symmetric (resp. semi-symmetric) if $m$ is identically zero (resp. nowhere zero) on $\r^2$;
\item[(ii)] metric (resp. non-metric) if $m+n$  is identically zero (resp. nowhere zero) on $\r^2$.
\end{enumerate}

When we state that a function on $\r^2$ is nowhere zero, we mean that there is no   open subset of $\r^2$ on which the function vanishes identically. Thus, when dealing with the non-metric the case, the functions $m$ and $n$ must be chosen appropriately. For example, let $(x,y)$ be the canonical coordinates on $\r^2$ and suppose that $n=x+y-m.$ Then $m+n=x+y,$
which vanishes along the straight line $y=-x$. Therefore, the choice $m+n$ fails to satisfy our assumption.

While much of the literature focuses on curvature tensors of the ambient space or general submanifold theory, the intrinsic geometry of curves under these non-standard connections remains largely unexplored, especially when the   vector field $\W$ possesses a canonical nature. In this paper, we bridge this gap by investigating the geometry of curves in the Euclidean plane $\r^2$ endowed with a   linear connection $\nabla$, where the vector field $\W$ is a constant unit vector field on $\r^2$. The presence of this vector field, together with   two functions $m$ and $n$, breaks the isotropic nature of the Euclidean plane and induces a non-trivial spatial distortion.  

To the best of the authors' knowledge, the only work  explicitly considering curves with respect to semi-symmetric connections is \cite{gu}. In that paper, the author studied geodesics with respect to a semi-symmetric metric connection in three-dimensional Euclidean space, as well as in three-dimensional Kenmotsu and Sasakian manifolds.

The choice of the linear connection $\nabla$ given by \eqref{e1} depends on the selection of the vector field $\W$. One of the simplest examples of such connections occurs when $\W$ is one of the two canonical vector fields $\partial_{x}$ or $\partial_{y}$.  More generally, in this paper, we will consider that $\W$ is a   constant unit vector field and thus, there is $\theta_0\in\r$ such that 
$$\W=\cos\theta_0\, \partial_x+\sin\theta_0\, \partial_y.$$

\begin{definition}
A linear connection $\nabla$ on $\r^2$ of the form \eqref{e1} is called a canonical linear connection if $\W$ is a constant unit vector field. In short, such a connection will be called a cl-connection.
\end{definition}

The geometric properties of $\nabla$ are therefore determined by the vector field $\W$ together with the functions $m$ and $n$. In particular, the torsion tensor $\T$ and the covariant derivative of the metric depend  on these functions. Consequently, the connection $\nabla$ is completely determined by the triplet $(\W,m,n)$.

\begin{convention}
Unless otherwise stated, the expression "the cl-connection $\nabla$ determined by $(\W,m,n)$" refers to the linear connection $\nabla$ defined by \eqref{e1}, where $\W$ is a   constant unit vector field on $\r^2$.
\end{convention}

Let $\gamma:I\subset\r\rightarrow\r^{2}$, $\gamma=\gamma(s)$, be a smooth curve parametrized by arc length. Denote by $T$ and $N$ the unit tangent and normal vector fields along $\gamma$, respectively. Along this paper,  $\kappa$ denotes the Frenet curvature of $\gamma$. The Frenet equations are given by
$$\begin{aligned}
\nabla_{T}^{0}T &=\kappa N \\
\nabla_{T}^{0}N &=-\kappa T,
\end{aligned}$$
where $\nabla^{0}$ is the Levi-Civita connection on $\r^{2}$. Using \eqref{e1}, we then obtain:
\begin{equation*}
\begin{aligned}
 \nabla_TT&=(m+n)\langle \W,T\rangle T +(\kappa+n\langle \W,N\rangle )N,\\
\nabla_{T}N &=(m\langle \W,N\rangle-\kappa)T.
\end{aligned}
\end{equation*}

To motivate the new curvatures  for the connection $\nabla$, recall that in the standard setting, the geodesics of $\r^{2}$ are straight lines. If $\gamma$ is a geodesic, then its Frenet curvature $\kappa$ is identically zero. In fact, when $\gamma$ is not necessarily parametrized by arc length, straight lines are characterized not only by the vanishing of their Frenet curvature but also by the following two additional properties:
\begin{enumerate}
\item $\nabla_{\gamma'}^{0}\gamma'$ is linearly dependent on $\gamma'$;
\item the unit normal vector field $N$ along $\gamma$ is parallel, that is,  $\nabla_{\gamma'}^{0}N=0$.
\end{enumerate}
Using these facts, we combine the normal acceleration component of $\nabla_T T$ and the tangential  component of $\nabla_T N$ into a single quadratic scalar. This motivates the following definition of  geodesic curvature. Meanwhile, we also define the tangential curvature as the scalar part of the tangential component of $\nabla_TT$.

\begin{definition}\label{def12}
Let $\nabla$ be a cl-connection on $\r^{2}$ determined by $(\W,m,n)$ and let $\gamma\colon I\to\r^{2}$ be a smooth curve parametrized by arc length. With respect to  $\nabla$, 
\begin{enumerate}
\item The tangential curvature $\kappa_t$ of $\gamma$ is defined by
\begin{equation}\label{d1}
\kappa_{t}=(m+n)\langle \W,T\rangle.
\end{equation}
\item   The geodesic curvature $\kappa_{g}\geq 0$ of $\gamma$   is defined by
\begin{equation}\label{d2}
\kappa_{g}^{2}=( \kappa+n\langle \W,N\rangle)^{2}+(m\langle \W,N\rangle-\kappa)^{2}.
\end{equation}
In addition, if $\kappa_{g}=0$ along $\gamma$, then   $\gamma$ is said to be a geodesic with respect to the connection $\nabla$.

\end{enumerate}
\end{definition}

With these definitions, a geodesic curve $\gamma$ with respect to a cl-connection $\nabla$ satisfies that $\nabla_{T}T$ is parallel to $ T$ and  that  $\nabla_{T}N=0$, where $T=\gamma'$ and $N$ are the unit tangent and normal vector fields along $\gamma$, respectively.  

The main purpose of this paper is   to establish  the fundamental theorem of curve theory: the existence and uniqueness of 
curves with prescribed (tangential or geodesic) curvatures. The  first main result   solves this problem for a 
prescribed tangential curvature via  quadratures (see Theorem \ref{t32}). Our second main result addresses the 
prescribed geodesic curvature problem. Because the geodesic curvature $\kappa_g$ involves the Frenet curvature along with the orthogonal projection of the   vector field $\W$, prescribing $\kappa_g$ translates into a more complex problem involving  
differential equations. Nevertheless, we prove that given a prescribed function $k(s) > 0$, there is a unique 
curve whose geodesic curvature is $k(s)$, which  is   governed by a first-order non-linear system of ordinary 
differential equations for the tangent angle (see Theorem \ref{t42}). More precisely, for any given initial position and direction, 
there exists a unique curve realizing this geodesic curvature, provided a strict algebraic discriminant condition holds at the 
initial point (see Theorem \ref{t44}).

The present paper is organized as follows. In Section \ref{s2}, we present  the behavior of the generalized curvatures under geometric transformations, along with the study of their total integrals for simple closed curves. 
Section \ref{s3} is devoted to the analysis of plane curves with a prescribed tangential curvature $\kappa_t$, determining those curves where $\kappa_t=0$ identically, proving the theorem of existence and uniqueness for a prescribed tangential curvature, and characterizing those curves with constant tangential curvature. In Section \ref{s4}, the development is analogous for the geodesic curvature. We characterize the geodesics  of the connection,  establish the existence and uniqueness theorems for curves with prescribed geodesic curvature and finally, classify those curves with constant geodesic curvature.  We also offer a geometric interpretation of these curvatures via the relative angle function (see Theorem \ref{t46}). Finally, in Section \ref{s5}, we construct   explicit parametrizations for the modified $\nabla$-clothoids and logarithmic $\nabla$-spirals.

 \section{Preliminaries}\label{s2}

For the connection given by \eqref{e1}, we assume that the connection functions $m$ and $n$  are not both identically zero, as    otherwise it reduces to the Levi-Civita connection. A first question arises as to  what happens with the notions of   tangential and geodesic curvatures when one of the connection functions vanishes. The following result is immediate from Definition  
\ref{def12}.

\begin{proposition} 
Let $\gamma(s)\subset \r^2$ be a smooth curve parametrized by arc length. Let $\kappa_t$ and $\kappa_g$ denote the tangential curvature and the geodesic curvature of $\gamma$ with respect to the cl-connection determined by $(\W,m,n)$, respectively.
\begin{enumerate}
    \item[(i)] If $m = 0$, then  
    \begin{align*}
    \kappa_t &= n\langle \W, T \rangle, \\
    \kappa_g &= \sqrt{(\kappa + n\langle \W, N \rangle)^2 + \kappa^2}.
    \end{align*}
    \item[(ii)]  If $n = 0$, then 
    \begin{align*}
    \kappa_t &= m\langle \W, T \rangle, \\
    \kappa_g &= \sqrt{\kappa^2 + (m\langle \W, N \rangle - \kappa)^2}.
    \end{align*}
\end{enumerate}

\end{proposition}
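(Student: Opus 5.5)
The proof will be a direct substitution into Definition \ref{def12}; no earlier result beyond the definitions \eqref{d1} and \eqref{d2} is needed. The formulas there are pointwise algebraic expressions in $m$, $n$, $\kappa$, $\langle \W,T\rangle$ and $\langle \W,N\rangle$. So it is enough to set one connection function equal to zero and simplify. Since $\kappa_g\geq 0$ by definition, the only extra point is to take the nonnegative square root of \eqref{d2}.

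For case (i), we put $m=0$. Then \eqref{d1} gives $\kappa_t=(0+n)\langle \W,T\rangle=n\langle \W,T\rangle$. In \eqref{d2} the second summand becomes $(0\cdot\langle \W,N\rangle-\kappa)^2=\kappa^2$. Hence $\kappa_g^2=(\kappa+n\langle \W,N\rangle)^2+\kappa^2$, and taking the nonnegative root gives the stated formula.

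For case (ii), we put $n=0$. Then \eqref{d1} reduces to $\kappa_t=m\langle \W,T\rangle$. In \eqref{d2} the first summand becomes $(\kappa+0)^2=\kappa^2$, so $\kappa_g^2=\kappa^2+(m\langle \W,N\rangle-\kappa)^2$, and again we take the nonnegative root.

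There is no real obstacle. The one point worth stating is that the hypothesis "$m=0$" (resp. "$n=0$") means the function vanishes identically on $\r^2$, or at least along $\gamma$. The substitution is then valid at every point of the curve, and the identities hold for all $s\in I$. As a consistency remark, in case (i) the connection is symmetric, and in case (ii) it is metric only if $m$ also vanishes. This is not needed for the proof, since the formulas are purely algebraic consequences of the definitions.
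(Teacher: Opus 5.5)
Your proposal is correct and matches the paper, which simply states that the result is immediate from Definition~\ref{def12}: substitute $m=0$ or $n=0$ into \eqref{d1} and \eqref{d2} and take the nonnegative square root. Your side remarks about torsion and metricity are accurate but, as you say, not needed for the proof.
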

 
Next, we investigate how the geometry induced by a cl-connection behaves under transformations of $\r^2$. Since $\W$ is 
a constant vector field and the Frenet curvature $\kappa$ is translation-invariant, it is straightforward to see that translations of $\r^2$ do not affect  $\kappa_t$ or $\kappa_g$. However, this does not hold for dilations. In 
classical Euclidean geometry, scaling a curve by a factor $c > 0$ simply scales its Frenet curvature $\kappa$ by $1/c$. However, the presence of the vector field $\W$ and the functions $m$ and $n$ introduces a   scale distortion.

\begin{proposition} 
Let $\nabla$ be a cl-connection on $\r^2$ determined by $(\W,m,n)$. Let $\gamma(s)$ be a smooth curve parametrized by arc length and let $\tilde{\gamma}(\tilde{s}) = c\gamma(s)$ be its homothetic transformation by a scale factor $c > 0$, where $\tilde{s} = cs$ is the arc length of $\tilde{\gamma}$. Then, the tangential and geodesic curvatures of $\tilde{\gamma}$ with respect to $\nabla$ are given by:
\begin{equation} \label{eh}
\begin{split}
\tilde{\kappa}_t(\tilde{s}) &= \left( \tilde{m}+ \tilde{n} \right) \langle \W, T(s) \rangle, \\
\tilde{\kappa}_g^2(\tilde{s}) &= \left( \frac{\kappa(s)}{c} + \tilde{n}\langle \W, N(s) \rangle \right)^2 + \left( \tilde{m}\langle \W, N(s) \rangle - \frac{\kappa(s)}{c} \right)^2,
\end{split}
\end{equation}
where $\tilde{m}(\tilde{s}) = m(c\gamma(s))$ and $\tilde{n}(\tilde{s}) = n(c\gamma(s))$.
\end{proposition}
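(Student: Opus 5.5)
The plan is to compute the Frenet data of $\tilde{\gamma}$ in terms of those of $\gamma$, and then substitute into Definition \ref{def12}, evaluating the connection functions at the points $\tilde{\gamma}(\tilde{s}) = c\gamma(s)$.

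First I will establish the basic relations. Since $\tilde{s} = cs$, the chain rule gives
$$\frac{d\tilde{\gamma}}{d\tilde{s}} = \frac{1}{c}\,\frac{d(c\gamma)}{ds} = \gamma'(s) = T(s),$$
so $\tilde{T}(\tilde{s}) = T(s)$ and $\tilde{\gamma}$ is indeed parametrized by arc length. The normal is obtained by rotating $T$ by $\pi/2$, so $\tilde{N}(\tilde{s}) = N(s)$. Differentiating $\tilde{T}$ with respect to $\tilde{s}$ gives
$$\frac{d\tilde{T}}{d\tilde{s}} = \frac{1}{c}\,T'(s) = \frac{\kappa(s)}{c}\,N(s),$$
and therefore $\tilde{\kappa}(\tilde{s}) = \kappa(s)/c$.

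Next, I will apply Definition \ref{def12} to $\tilde{\gamma}$. Here $m$ and $n$ must be evaluated along $\tilde{\gamma}$, that is at the point $c\gamma(s)$, which produces $\tilde{m}$ and $\tilde{n}$. The field $\W$ is constant, so $\langle \W,\tilde{T}\rangle = \langle \W,T(s)\rangle$ and $\langle \W,\tilde{N}\rangle = \langle \W,N(s)\rangle$. Substituting these into \eqref{d1} and \eqref{d2} yields \eqref{eh} directly.

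There is no real obstacle in this argument. The only point needing care is bookkeeping: the functions $m$ and $n$ are evaluated at the image point $c\gamma(s)$ and not at $\gamma(s)$, and it is exactly this that prevents a clean $1/c$ scaling. One also has to make sure the orientation of $N$ is preserved, which holds because $c > 0$.
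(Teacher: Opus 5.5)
Your proposal is correct and follows essentially the same route as the paper's proof. You both use the chain rule to get $\tilde T=T$, $\tilde N=N$ and $\tilde\kappa=\kappa/c$, evaluate $m,n$ at $c\gamma(s)$, and then substitute into Definition~\ref{def12}.
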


\begin{proof}
The proof is a direct computation using the definitions of the tangential curvature and geodesic curvature of $\tilde{\gamma}$ given in \eqref{d1} and \eqref{d2}. Let $\tilde{\gamma}(\tilde{s}) = c\gamma(\tilde{s}/c)$ be the scaled curve. Taking the derivative with respect to the new arc length parameter $\tilde{s}$, the unit tangent vector is 
$$\tilde{T}(\tilde{s}) = \frac{d\tilde{\gamma}}{d\tilde{s}} = c \frac{d\gamma}{ds} \frac{ds}{d\tilde{s}} = T(s).$$
Since the tangent vector remains invariant, so does the normal vector, meaning $\tilde{N}(\tilde{s}) = N(s)$. Furthermore, taking the second derivative yields 
$$\frac{d\tilde{T}}{d\tilde{s}} = \frac{dT}{ds} \frac{ds}{d\tilde{s}} = \frac{\kappa(s)}{c} N(s),$$
which recovers the classical scaling law for Euclidean curves: $\tilde{\kappa}(\tilde{s}) = \frac{\kappa(s)}{c}$. Moreover,  $\langle \W, \widetilde{T} \rangle = \langle \W, T \rangle$ and $\langle \W, \widetilde{N} \rangle = \langle \W, N \rangle$. 

However, the connection functions $m$ and $n$ depend on the spatial position and must be evaluated at the new coordinates $\tilde{\gamma}(\tilde{s}) = c\gamma(s)$. Substituting these scaled elements $\tilde{m}$, $\tilde{n}$, $\widetilde{T}$, $\widetilde{N}$, and $\tilde{\kappa}$ directly into Definition \ref{def12} yields the  expressions in \eqref{eh}.  
\end{proof}

In the classical theory of planar curves, the integral of the Frenet curvature along a simple closed curve is a topological invariant, evaluating to $\pm 2\pi$ by the well-known Hopf's Umlaufsatz (see \cite[Theorem 3.1.4]{pre}). It is a natural question to ask whether the total generalized tangential curvature exhibits a similar behavior.  

\begin{proposition} \label{totcu}
Let $\gamma(s)\subset \r^2$ be a smooth simple closed curve parametrized by arc length, bounding a compact domain $\Omega\subset \r^2$. Let $\nabla$ be a cl-connection on $\r^2$ determined by $(\W,m,n)$, where $\W = (\cos\theta_0, \sin\theta_0)$ for some $\theta_{0}\in\r$. Then, the total tangential curvature of $\gamma$ with respect to $\nabla$  is given by
\begin{equation}\label{k3}
 \oint_\gamma \kappa_t(s) \, ds = \iint_\Omega \left( \sin\theta_0 \frac{\partial (m+n)}{\partial x} - \cos\theta_0 \frac{\partial (m+n)}{\partial y} \right) dx dy. 
 \end{equation}
In particular, if   $m+n = \lambda \in \r$ is constant, then   $\oint_\gamma \kappa_t(s) \, ds = 0$.
\end{proposition}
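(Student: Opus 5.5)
The plan is to write the total tangential curvature as the line integral of a $1$-form along $\gamma$ and then apply Green's theorem on $\Omega$. Set $f=m+n$, a smooth function on $\r^2$, and write $\gamma(s)=(x(s),y(s))$, so that $T=(x'(s),y'(s))$. Since $\W=(\cos\theta_0,\sin\theta_0)$ is constant, Definition \ref{def12} gives
$$\kappa_t(s)=f(\gamma(s))\,\langle \W,T\rangle = f(\gamma(s))\bigl(\cos\theta_0\, x'(s)+\sin\theta_0\, y'(s)\bigr).$$
Hence
$$\oint_\gamma \kappa_t\,ds=\oint_\gamma P\,dx+Q\,dy, \qquad P=f\cos\theta_0,\quad Q=f\sin\theta_0,$$
and $P,Q$ are smooth on all of $\r^2$, in particular on a neighbourhood of $\overline{\Omega}$.

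Next, since $\gamma$ is a smooth simple closed curve bounding the compact domain $\Omega$, I apply Green's theorem, assuming $\gamma$ is positively oriented (counterclockwise). This gives
$$\oint_\gamma P\,dx+Q\,dy=\iint_\Omega\Bigl(\frac{\partial Q}{\partial x}-\frac{\partial P}{\partial y}\Bigr)dx\,dy .$$
Because $\theta_0$ is constant, $Q_x=\sin\theta_0\, f_x$ and $P_y=\cos\theta_0\, f_y$. This is exactly the integrand in \eqref{k3}.

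For the particular case $m+n=\lambda$ constant, both partial derivatives of $f$ vanish, so the right-hand side of \eqref{k3} is zero. This can also be checked directly: the integrand is then $\lambda\,\frac{d}{ds}\langle \W,\gamma(s)\rangle$, an exact derivative, whose integral over a closed curve is zero.

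The only delicate point is orientation. Formula \eqref{k3} holds for the counterclockwise parametrization, and reversing the orientation changes the sign of both the integral and $T$. I would state the orientation convention explicitly in the proof. Beyond that, I would note the regularity needed for Green's theorem: $\gamma$ is smooth and $f$ is smooth on all of $\r^2$, so these hypotheses are met.
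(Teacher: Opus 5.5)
Your proposal is correct and follows essentially the same route as the paper: write $\kappa_t\,ds$ as $P\,dx+Q\,dy$ with $(P,Q)=(m+n)\W$, then apply Green's theorem on $\Omega$. Your explicit counterclockwise-orientation convention, and your direct check of the constant case via the exact derivative $\lambda\,\frac{d}{ds}\langle \W,\gamma(s)\rangle$, are small but welcome additions that the paper leaves implicit.
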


\begin{proof}
From \eqref{d1}, we know  that $\kappa_t(s) = (m+n)\langle \W, T(s) \rangle=(m+n)\langle \W, \gamma'(s) \rangle$. On $\r^2$, define the vector field $$V(x,y) = (m(x,y)+n(x,y))\W.$$
 Writing $V$ in coordinates, we have $V(x,y) = (P(x,y), Q(x,y))$ where
$$ P(x,y) = (m+n)\cos\theta_0, \quad Q(x,y) = (m+n)\sin\theta_0. $$
Applying Green's Theorem to  $\gamma$, we obtain
$$ \oint_\gamma \langle V, T \rangle \, ds = \oint_\gamma P \, dx + Q \, dy = \iint_\Omega \left( \frac{\partial Q}{\partial x} - \frac{\partial P}{\partial y} \right) dx dy. $$
Substituting the partial derivatives of $P$ and $Q$ yields \eqref{k3}.   The last   statement is immediate.  
\end{proof}

\begin{remark}
Unlike the total Frenet curvature, the total tangential curvature is influenced by the functions $m$ and $n$, as shown in Proposition \ref{totcu}. Thus, its value is not uniquely determined by the topology of the curve. However, the total tangential curvature vanihes identically   whenever $m+n$ is constant. In particular, this is the case for metric cl-connections.
\end{remark}

In   contrast to the tangential curvature, the total geodesic curvature $\kappa_g$ along a closed curve is more difficult  due to its   non-linear, quadratic definition (see \eqref{d2}). However, we can establish a lower bound for the total geodesic curvature of simple closed curves.

\begin{proposition} 
Let $\gamma $ be a smooth simple closed curve in $\r^2$ parametrized by arc length $s$. Let $\nabla$ be a cl-connection on $\r^2$ determined by $(\W,m,n)$. Then, the total geodesic curvature with respect to $\nabla$ satisfies the inequality
\begin{equation}\label{eu}
\oint_\gamma \kappa_g(s) \, ds \ge \frac{1}{\sqrt{2}} \left| 4\pi + \oint_\gamma (n - m)\langle \W, N \rangle \, ds \right|.
\end{equation}
If $m=n$,   we have 
\begin{equation}\label{eu2}
\oint_\gamma \kappa_g(s) \, ds \ge 2\sqrt{2}\pi.
\end{equation}
\end{proposition}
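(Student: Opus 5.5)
The plan is to bound $\kappa_g$ pointwise from below by a linear expression in $\kappa$, integrate, and then use Hopf's Umlaufsatz. Write
$$a=\kappa+n\langle \W,N\rangle,\qquad b=m\langle \W,N\rangle-\kappa,$$
so that $\kappa_g=\sqrt{a^2+b^2}$ by \eqref{d2}. The elementary inequality $\sqrt{a^2+b^2}\ge \frac{1}{\sqrt2}|a-b|$ (Cauchy--Schwarz, or equivalently $(a+b)^2\ge 0$) then gives
$$\kappa_g\ \ge\ \frac{1}{\sqrt2}\,\bigl|2\kappa+(n-m)\langle \W,N\rangle\bigr|$$
pointwise along $\gamma$.

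Next we integrate over the closed curve and move the absolute value outside the integral:
$$\oint_\gamma \kappa_g\,ds\ \ge\ \frac{1}{\sqrt2}\Bigl|\oint_\gamma \bigl(2\kappa+(n-m)\langle \W,N\rangle\bigr)\,ds\Bigr| .$$
Since $\gamma$ is simple and closed, Hopf's Umlaufsatz (\cite[Theorem 3.1.4]{pre}) gives $\oint_\gamma\kappa\,ds=\pm2\pi$. If the orientation is chosen so that this value is $2\pi$, we obtain exactly \eqref{eu}.

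The main obstacle is the sign convention. With the opposite orientation the integral of $\kappa$ is $-2\pi$, and the bound reads $\bigl|-4\pi+\oint(n-m)\langle \W,N\rangle\,ds\bigr|$. Reversing the parametrization flips both $\kappa$ and $N$, so this quantity equals $\bigl|4\pi+\oint(n-m)\langle\W,N\rangle\,ds\bigr|$ computed with the reversed curve's own $N$. I would therefore state that $\gamma$ is taken positively oriented (counterclockwise), so that \eqref{eu} holds as written. Note that $\kappa_g$ itself is not invariant under reversal unless $m=n$, so the orientation choice genuinely matters.

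Finally, when $m=n$ the extra integral vanishes identically. The right-hand side of \eqref{eu} then becomes $\frac{4\pi}{\sqrt2}=2\sqrt2\,\pi$, which is \eqref{eu2}. In this case the bound holds for either orientation, because of the absolute value.
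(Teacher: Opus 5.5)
Your argument is the same as the paper's: the pointwise bound $\kappa_g\ge\frac{1}{\sqrt2}\,|2\kappa+(n-m)\langle \W,N\rangle|$, integration with the absolute value moved outside the integral, and Hopf's Umlaufsatz for a counterclockwise parametrization, which the paper also assumes. One correction to your aside: $\kappa_g$ is invariant under reversal for all $m,n$, because $\kappa+n\langle \W,N\rangle$ and $m\langle \W,N\rangle-\kappa$ both change sign when $\kappa$ and $N$ do; the orientation dependence you rightly flag comes entirely from the term $\oint_\gamma (n-m)\langle \W,N\rangle\,ds$ on the right-hand side.
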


\begin{proof}
From \eqref{d2}, we write   $\kappa_g = \sqrt{A^2 + B^2}$, where $A = \kappa + n\langle \W, N \rangle$ and $B = m\langle \W, N \rangle - \kappa$. Utilizing the  inequality $\sqrt{A^2 + B^2} \ge \frac{1}{\sqrt{2}}|A - B|$ for any real numbers $A$ and $B$, we have
$$ A - B = (\kappa + n\langle \W, N \rangle) - (m\langle \W, N \rangle - \kappa) = 2\kappa + (n - m)\langle \W, N \rangle. $$
This allows us to bound $\kappa_g$   by
$$ \kappa_g(s) \ge \frac{1}{\sqrt{2}} \left| 2\kappa(s) + (n - m)\langle \W, N \rangle \right|. $$
Integrating both sides along   $\gamma$, we obtain
$$ \oint_\gamma \kappa_g(s) \, ds \ge \frac{1}{\sqrt{2}} \left| 2\oint_\gamma \kappa(s) \, ds + \oint_\gamma (n - m)\langle \W, N \rangle \, ds \right|. $$
Since $\gamma$ is a simple closed curve, Hopf's Umlaufsatz states that   $\oint_\gamma \kappa(s) \, ds = 2\pi$ (assuming a counterclockwise orientation). Substituting this     into the above  inequality   yields \eqref{eu}. If $m = n$, then we arrive at \eqref{eu2},  
which completes the proof.
\end{proof}

\begin{remark}
The bound given in \eqref{eu2} is sharp. In the case $m=n=0$ ($\nabla=\nabla^0$), we have $A=-B=\kappa$, which means $\kappa_g = \sqrt{2}|\kappa|$. For any convex simple closed curve where $\kappa > 0$, the integral yields   $\oint_\gamma \kappa_g \, ds =   2\sqrt{2}\pi$. 
\end{remark}

\section{Tangential curvature of plane curves}\label{s3}

In this section, we investigate the tangential curvature of plane curves with respect to a cl-connection $\nabla$. Recall that the tangential curvature $\kappa_t$ vanishes identically whenever $\nabla$ is a metric cl-connection. The following result characterizes the curves for which $\kappa_t$ vanishes identically in the non-metric case.

\begin{proposition}
Let $\nabla$ be a non-metric cl-connection on $\r^2$ determined by $(\W,m,n)$, where $\W = (\cos\theta_{0},\sin\theta_{0})$ for some $\theta_{0}\in\r$. Let $\gamma$ be a smooth curve in $\r^2$ parametrized by arc length. The tangential curvature $\kappa_{t}$ of $\gamma$ with respect to $\nabla$ vanishes identically if and only if
$$
\gamma(s)=(-\sin\theta_{0},\cos\theta_{0})s+\vec{w}, \quad \vec{w}\in \r^2.
$$
\end{proposition}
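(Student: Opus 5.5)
Since $\nabla$ is non-metric, $m+n$ is nowhere zero on $\r^2$ in the sense fixed in the introduction: it does not vanish identically on any open set. By \eqref{d1}, $\kappa_t\equiv 0$ means $(m+n)(\gamma(s))\langle \W,T(s)\rangle=0$ for all $s\in I$. For the direct implication, the aim is to deduce that $\langle \W,T\rangle\equiv 0$ on $I$.

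Step one is to reduce to the zero set. Let $Z=\{s\in I: \langle \W,T(s)\rangle\neq 0\}$. This set is open. On $Z$ we must have $(m+n)\circ\gamma=0$, so $\gamma(Z)$ lies in the zero set of $m+n$.

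Step two is the main obstacle. The hypothesis only forbids $m+n$ from vanishing on an open subset of the plane, whereas $\gamma(Z)$ is just a curve. So strictly speaking, one cannot rule out a curve running along the zero locus; the paper's own example $m+n=x+y$ shows that such a locus can exist. I would handle this by interpreting the non-metric hypothesis as the paper evidently intends, namely that $m+n$ does not vanish along the curve on any subinterval. With that reading, $Z$ must be empty. Indeed, a nonempty open $Z$ contains an interval on which $(m+n)\circ\gamma\equiv0$, which is excluded. I would state this interpretation explicitly when writing the proof.

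Step three is integration. With $\langle \W,T\rangle\equiv 0$ and $|T|=1$ in $\r^2$, $T$ is $\pm(-\sin\theta_0,\cos\theta_0)$ at each point. By continuity, $T$ is one constant vector on the interval $I$. Integrating gives $\gamma(s)=\pm(-\sin\theta_0,\cos\theta_0)s+\vec w$. After a reparametrization $s\mapsto -s$ if necessary, this is the stated form.

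For the converse, if $\gamma$ has this form, then $T=(-\sin\theta_0,\cos\theta_0)$. A direct check gives $\langle \W,T\rangle=-\cos\theta_0\sin\theta_0+\sin\theta_0\cos\theta_0=0$, so $\kappa_t\equiv0$ by \eqref{d1}.
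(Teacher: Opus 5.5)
Your proof is correct and essentially matches the paper's: from $(m+n)\langle \W,T\rangle=0$ and non-metricity you get $\langle \W,T\rangle\equiv 0$, so $T$ is a constant unit vector orthogonal to $\W$ and $\gamma$ is the stated line, and the converse follows from \eqref{d1}. The paper simply asserts that non-metricity gives $m+n\neq 0$ along $\gamma$; your explicit reading of the hypothesis and your handling of the sign of $T$ via $s\mapsto -s$ make precise what the paper leaves implicit. That reading is genuinely stronger than the literal open-set definition, as the line $y=-x$ with $m+n=x+y$ shows.
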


\begin{proof}
By the definition of $\kappa_{t}$ in \eqref{d1}, we have $(m+n)\langle \W,T\rangle=0$, where $T$ is the unit tangent vector field of $\gamma$. Since $\nabla$ is non-metric, we have $m+n\neq 0$ along $\gamma$, which implies $\langle \W,T\rangle=0$. Since $\W$ is constant, it follows that $T$ is constant and thus $\gamma$ is a straight line of the stated form. The converse follows directly from \eqref{d1}.
\end{proof}
  
When $\nabla$ is non-metric, not every straight line has vanishing tangential curvature. In particular, straight lines normal to $\W$ satisfy $\kappa_t=0$.

Let $R_{\theta}$ denote a rotation about the origin $(0,0)\in\r^2$ by an angle $\theta$. The following result establishes the existence and uniqueness of plane curves whose tangential curvature $\kappa_{t}$ is prescribed.

\begin{theorem}\label{t32}
Let $\nabla$ be a non-metric cl-connection on $\r^{2}$ determined by $(\W,m,n)$, where $\W = (\cos\theta_{0},\sin\theta_{0})$ for some $\theta_{0}\in\r$. Let also $k:I\rightarrow\r$ be a nowhere zero smooth function and $|k|\leq |m+n|$. Then, there exists a smooth curve $\gamma:I\rightarrow\r^2$, parametrized by arc length 
such that the tangential curvature of $\gamma$ with respect to $\nabla$ is $k$. In addition, up to translations of 
$\r^2$, the curve $\gamma$ is uniquely given by
\begin{equation}\label{e4}
\gamma(s)=R_{\theta_{0}}\circ\left(\pm\int^{s}f(t)dt,\pm\int^{s}\sqrt{1-f(t)^{2}}dt\right),
\end{equation}
where $f(s):=\frac{k(s)}{(m+n)(\gamma(s))}$.
\end{theorem}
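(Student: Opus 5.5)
Write the unit tangent as $T(s)=(\cos\theta(s),\sin\theta(s))$. Then $\langle \W,T\rangle=\cos(\theta-\theta_0)$, so by \eqref{d1} the condition $\kappa_t=k$ reads
$$(m+n)(\gamma(s))\cos(\theta(s)-\theta_0)=k(s).$$
Since $\nabla$ is non-metric, $m+n$ is nowhere zero along $\gamma$. Hence this is equivalent to $\cos(\theta-\theta_0)=f(s)$, with $f=k/((m+n)\circ\gamma)$. The hypothesis $|k|\le|m+n|$ gives $|f|\le 1$, so $\sin(\theta-\theta_0)=\pm\sqrt{1-f^2}$. I would work in the rotated frame $R_{-\theta_0}$, where $\W$ becomes $\partial_x$. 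There the tangent of $R_{-\theta_0}\gamma$ is $(f,\pm\sqrt{1-f^2})$, up to the sign choices allowed in \eqref{e4}. Integrating and rotating back by $R_{\theta_0}$ gives \eqref{e4}, up to an additive constant, i.e. a translation.

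The genuine difficulty is that $f$ depends on $\gamma$ itself through $(m+n)(\gamma(s))$. So \eqref{e4} is an implicit formula, and existence must come from an ODE argument. I would pose the first-order system for the position $\beta=R_{-\theta_0}\gamma$:
$$\beta'(s)=\Bigl(F(s,\beta),\ \epsilon\sqrt{1-F(s,\beta)^2}\Bigr),\qquad F(s,\beta)=\frac{k(s)}{(m+n)(R_{\theta_0}\beta)},$$
with $\epsilon=\pm1$ fixed and initial data $\beta(s_0)=p_0$. On a neighbourhood where $m+n\neq0$ and $|F|<1$, the right-hand side is smooth, hence locally Lipschitz. Picard--Lindel\"of then gives a unique local solution. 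This solution has $|\beta'|=1$, so it is arc-length parametrized, and by construction $\kappa_t=k$. Uniqueness up to translations works as follows: any curve with $\kappa_t=k$ has tangent angle satisfying $\cos(\theta-\theta_0)=f$. Continuity of $\sqrt{1-f^2}$, together with the sign being fixed on intervals where $|f|<1$, forces the tangent to be one of the two branches. The curve is then determined by its initial point.

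The main obstacle is the square root $\sqrt{1-F^2}$, which fails to be Lipschitz where $|F|=1$. The statement only assumes $|k|\le|m+n|$, so such points may occur, and there both branches can meet, which threatens uniqueness. My first attempt would be to assume strict inequality, or to argue on the open set where $|f|<1$ and treat the $\pm$ in \eqref{e4} as encoding the branch choice. A cleaner alternative is to parametrize by the angle $\theta$ directly. One would still need $\theta$ to be determined by $\cos(\theta-\theta_0)=f$ via a smooth choice of $\arccos$, which again requires $|f|<1$. I would also note that the nowhere-zero assumption on $k$ rules out the degenerate line of the preceding Proposition. Beyond that, one should check that the non-metric hypothesis keeps $F$ smooth along the solution.
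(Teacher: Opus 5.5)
Your route differs from the paper's, and on the main point it is more careful. The paper differentiates $\langle \W,T\rangle=f$ to get $\kappa\langle \W,N\rangle=f'$, hence $\kappa=\pm f'/\sqrt{1-f^2}$. It integrates this to obtain the tangent angle $\mp\arccos f+\theta$, and fixes $\theta=\theta_0$ by linear independence of $f$ and $\sqrt{1-f^2}$, which needs $f$ nonconstant. Its existence claim rests only on differentiating \eqref{e4}. You read the tangent off directly from $\cos(\theta-\theta_0)=f$, which needs no differentiation, no integration constant and no case split on whether $f$ is constant. More importantly, you see that \eqref{e4} is implicit because $f$ depends on $\gamma$. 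The paper's converse therefore only shows that a curve satisfying \eqref{e4}, if one exists, has $\kappa_t=k$. Your ODE for $\beta=R_{-\theta_0}\gamma$ is exactly the differentiated form of \eqref{e4}, and it actually produces the curve. Your uniqueness statement (``determined by the initial point and the sign'') is also the correct one: since $m+n$ depends on position, a translate of $\gamma$ in general does not have $\kappa_t=k$.

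Two loose ends remain.

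First, Picard--Lindel\"of is local, but the statement asks for $\gamma$ on all of $I$. Read the hypothesis as $|k(s)|\le|(m+n)(p)|$ for all $s\in I$, $p\in\r^2$, the only reading that makes sense before $\gamma$ exists. Then $|m+n|\ge\sup_I|k|>0$, so $F$ is smooth on $I\times\r^2$, which settles your last worry. Moreover $|\beta'|=1$ prevents escape to infinity, so the solution extends to all of $I$.

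Second, your concern at $|f|=1$ is justified, but it is a defect of the statement rather than of your method; the paper's proof also divides by $\sqrt{1-f^2}$. The real issue is not Lipschitz continuity. If $|F|\le 1$ everywhere, $\sqrt{1-F^2}$ is the square root of a nonnegative smooth function and hence locally Lipschitz, by Glaeser's inequality. The issue is branch switching and smoothness. Take $m+n\equiv 1$ and $k(s)=1-s^2/2$ on $(-1,1)$. The smooth curves with $\kappa_t=k$ have $R_{-\theta_0}T=(1-s^2/2,\pm s\sqrt{1-s^2/4})$, whose second component changes sign at $s=0$. By contrast, \eqref{e4} with a fixed sign gives a curve that is not even $C^2$. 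So the strict inequality $|k|<|m+n|$ is the right hypothesis, and with it your argument, plus the global extension above, is a complete proof.
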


\begin{proof}
 By the definition of $\kappa_t$ in \eqref{d1},  we get
\begin{equation}\label{e5}
\langle \W,T\rangle=\frac{k}{m+n}=f.
\end{equation}
  Note that $|f|\leq 1$ along $\gamma$, because $\W$ and $T$ are unit vector fields and $k$ is nowhere zero. Taking the derivative of \eqref{e5} with respect to the Levi-Civita connection and applying the Frenet formulas, we find
\begin{equation}\label{e52}
\kappa\langle \W,N\rangle=f',
\end{equation}
where $\kappa$ is the Frenet curvature of $\gamma$. 

Assume that $\gamma$ is not a straight line. Then, $f$ is a non-constant function. Notice that the proof remains true when $\gamma$ is a straight line. Since $\W$ is a unit vector field, we have 
$\langle \W,T\rangle^{2}+\langle \W,N\rangle^{2}=1$ which, in combination of \eqref{e5} and \eqref{e52}, leads to 
$$\kappa=\pm\frac{f^{\prime}}{\sqrt{1-f^{2}}}.$$
Integrating, we derive
\begin{equation}\label{e6}
\begin{aligned}
\int^{s}\kappa(u)\, du=\begin{cases}\pm \sin^{-1}(f) +\theta,\\ 
 \mp \cos^{-1}(f)+\theta\end{cases}
\end{aligned}
\end{equation}
where $\theta\in\r$ is a constant. Recall that $\gamma$ is uniquely determined by its Frenet curvature $\kappa$ via  the expression
\begin{equation}\label{e7}
\gamma(s)=\left(\int^{s}\cos\left(\int^{t}\kappa(u)\, du\right)dt,\int^{s}\sin\left(\int^{t}\kappa(u)\, du\right)dt\right).
\end{equation}
By taking the inverse cosine in \eqref{e6} and substituting it into \eqref{e7}, we obtain the parametrization given by \eqref{e4}.
On the other hand, to determine the constant $\theta$, we analyze the alignment of the tangent vector. Let $\alpha(s) = 
\int^s \kappa(u)du$ be the angle function from \eqref{e7}, so that $T(s) = (\cos\alpha(s), \sin\alpha(s))$. Computing the 
inner product yields $\langle \W, T(s) \rangle = \cos(\alpha(s) - \theta_0) = f(s)$. Substituting the angular relation from \eqref{e6}
 into this  identity and expanding it via standard trigonometric formulas for the sum and difference of angles, we obtain
$$(\cos\theta_{0}\cos\theta+\sin\theta_{0}\sin\theta-1)f(s)\pm(\sin\theta_{0}\cos\theta-\cos\theta_{0}\sin\theta)\sqrt{1-f(s)^{2}}=0,$$
for all $s$, or equivalently, 
$$(\cos(\theta_{0}-\theta) -1)f(s)\pm\sin(\theta_{0}-\theta)\sqrt{1-f(s)^{2}}=0.$$
Since $f$ is non-constant, the functions $f(s)$ and $\sqrt{1-f(s)^{2}}$ are linearly independent over $\mathbb{R}$. Therefore, their coefficients must vanish simultaneously, which  implies $\cos(\theta_{0}-\theta) = 1$, and consequently $\theta=\theta_{0}$. Substituting this values  into the   system \eqref{e7}   yields the parametrization \eqref{e4}.
  
Conversely, differentiating \eqref{e4} yields the unit tangent vector 
$$T(s) = R_{\theta_0}\circ(f(s), \pm\sqrt{1-f(s)^2}).$$
 Since  $\W = R_{\theta_0}\circ(1,0)$, their inner product reduces to $\langle \W, T(s) \rangle = f(s)$,
 which   verifies that $\kappa_t = (m+n)\langle \W, T \rangle = k$.
 \end{proof}

As a consequence of Theorem~\ref{t32}, we classify the curves with   constant tangential curvature.

\begin{corollary}\label{co34}
Let $\nabla$ be a non-metric cl-connection determined by $(\W,m,n)$, where $\W = (\cos\theta_{0},\sin\theta_{0})$ for  any $\theta_0 \in  \r$. Let $c \in \r \setminus \{0\}$ be a prescribed constant. If $\gamma:I\rightarrow\r^2$ is a smooth curve parametrized by arc length with constant tangential curvature $\kappa_{t} = c$, then up to rigid translations, $\gamma$ is uniquely given by
\begin{equation}\label{e10}
\gamma(s)=R_{\theta_{0}}\circ\left(\pm \int^{s}\frac{c}{m+n}\,dt, \, \pm\int^{s}\sqrt{1-\frac{c^2}{(m+n)^2}}\,dt\right),
\end{equation}
where $|m+n| \geq |c|$ along the trace of the curve. Furthermore, $\gamma$ is a straight line if and only if $m+n$ is constant along $\gamma$.
\end{corollary}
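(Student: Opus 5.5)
The plan is to specialize Theorem \ref{t32} to the constant function $k\equiv c$, and then handle the straight-line characterization separately.

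\textbf{Step 1: the parametrization \eqref{e10}.} Since $c\neq 0$, the constant function $k\equiv c$ is nowhere zero. I will first check the hypothesis $|k|\le|m+n|$. This is not an extra assumption: from \eqref{d1} we have
$$|c|=|m+n|\,|\langle \W,T\rangle|\le |m+n|$$
along the trace, because $\W$ and $T$ are unit vectors. In particular $m+n$ does not vanish on $\gamma$. Theorem \ref{t32} then applies with $f=c/(m+n)(\gamma(s))$. Substituting into \eqref{e4} gives \eqref{e10} directly, together with uniqueness up to translations. I would note that, as in Theorem \ref{t32}, the integrands depend on $\gamma$ itself. So \eqref{e10} should be read as an implicit description of the curve, and uniqueness is inherited from that theorem rather than proved afresh.

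\textbf{Step 2: straight line $\Rightarrow$ $m+n$ constant along $\gamma$.} If $\gamma$ is a straight line, then $T$ is constant, so $\langle \W,T\rangle$ is a constant. It is nonzero, since $c\ne0$. Hence $m+n=c/\langle \W,T\rangle$ is constant along $\gamma$.

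\textbf{Step 3: $m+n$ constant along $\gamma$ $\Rightarrow$ straight line.} Now $f=c/(m+n)$ is constant along $\gamma$. By \eqref{e52}, $\kappa\langle \W,N\rangle=f'=0$. If $\langle \W,N\rangle\ne0$ on an open set, then $\kappa=0$ there. If instead $\langle \W,N\rangle=0$ on an open set, then $T=\pm \W$ there, which is again constant, so $\kappa=0$. By continuity $\kappa\equiv0$, and $\gamma$ is a line. Alternatively, one can read the conclusion off \eqref{e10}: both integrands are constant, so $\gamma$ is affine in $s$.

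\textbf{Main obstacle.} The delicate point is the implicit dependence of $f$ on $\gamma$ in Theorem \ref{t32}. For the corollary itself I would simply defer to that theorem. The only extra care is the case split in Step 3 on whether $\langle \W,N\rangle$ vanishes, which is handled by the openness and continuity argument given there.
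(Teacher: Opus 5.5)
Your proposal is correct and follows the paper's proof. You specialize Theorem~\ref{t32} to $k\equiv c$ to obtain \eqref{e10} and the bound $|m+n|\ge|c|$, then use $\kappa\langle \W,N\rangle=f'$ from \eqref{e52} for the straight-line equivalence. Your case split in Step~3 is more careful than the paper, which simply asserts $\langle \W,N\rangle\neq 0$; that assertion fails for a line parallel to $\W$ with $m+n\equiv\pm c$, although $\kappa=0$ still holds there, exactly as your argument shows.
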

\begin{proof}
By setting the prescribed curvature function $k(s) = c$ in Theorem~\ref{t32}, the  function $f$ reduces  to $f(s) = \frac{c}{m+n}$.
 Substituting this  into \eqref{e4}  yields the parametric form \eqref{e10}. Since   
$|f(s)| \leq 1$, this implies the restriction $|m+n| \geq  |c|$. 

Finally, from equation \eqref{e52}, the classical Frenet curvature satisfies $\kappa \langle \W, N \rangle = f'$. If $m+n$ is constant 
along the curve, then $f' = 0$, which forces $\kappa = 0$ (since $\langle \W, N \rangle \neq 0$), confirming that 
$\gamma$ is a straight line. Conversely, if $\gamma$ is a straight line, then $\kappa = 0$, which implies $f' = 0$, 
meaning that $m+n$ must remain constant along the trace of $\gamma$.
\end{proof}

To illustrate this result,   we present an    example.

\begin{example}\label{ex34}
Consider the non-metric cl-connection $\nabla$   determined by $(\W,m,n)$, where $\W = \partial_x = (1,0)$ (hence $\theta_0 = 0$) and the functions $m,n$ satisfy $m(x,y) + n(x,y) =e^x$. Let us prescribe a constant tangential curvature $\kappa_t(s) = 1$ for $s \in [1, \infty)$. We aim to find the   curve $\gamma$ satisfying $\kappa_t(s) = 1$. According to Corollary \ref{co34}, we set 
$$f(s) = \frac{1}{m+n} = \frac{1}{s}$$ 
along the curve, which implies $(m+n)\circ\gamma(s) = s$. Notice that $|(m+n)\circ\gamma|\geq \kappa_t$ for all $s$. Utilizing  \eqref{e7}, we find the coordinates of $\gamma$:
\begin{equation*}
\begin{split}
 x(s) &= \int^s f(t) \, dt = \int^s \frac{1}{t} \, dt = \log s, \\
 y(s) &= \int^s \sqrt{1 - f(t)^2} \, dt = \int^s \sqrt{1 - \frac{1}{t^2}} \, dt = \sqrt{s^2 - 1} - \arctan\left(\sqrt{s^2 - 1}\right). 
 \end{split}
 \end{equation*}
Thus, up to an initial translation, the curve is  expressed as (see Fig. \ref{fig1}.)
\begin{equation}\label{constex}
 \gamma(s) = \left( \log s, \, \sqrt{s^2 - 1} - \arctan\left(\sqrt{s^2 - 1}\right) \right), \quad s \in [1, \infty).  \end{equation}
\end{example}

\begin{figure}[ht]
\centering
\includegraphics[width=0.3\textwidth]{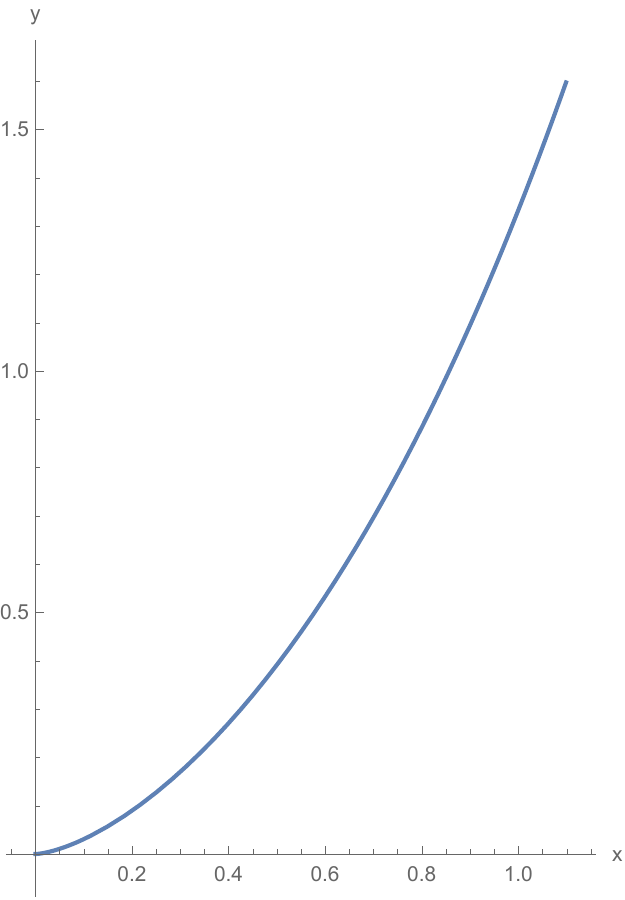}
\caption{The curve defined by \eqref{constex} with constant tangential curvature $\kappa_t(s)=1$ for $s\in[1,3]$.}
\label{fig1}
\end{figure}

\begin{remark} 
Example~\ref{ex34} demonstrates the  lack of rotational invariance for the tangential curvature in contrast to the Euclidean geometry. If we rotate the curve $\gamma(s)$ by an 
angle of $\pi/4$  around the origin, we obtain   $\tilde{\gamma}(s) = R_{\pi/4}(\gamma(s)) = (\tilde{x}(s), \tilde{y}(s))$, where its 
new $x$-coordinate is  $\tilde{x}(s) = \frac{\sqrt{2}}{2}(x(s) - y(s))$. Since the connection is 
fixed,   the tangential curvature of the rotated curve at its new spatial coordinates is 
$$ \tilde{\kappa}_t(s) = e^{\tilde{x}(s)} \langle \W, \tilde{T}(s) \rangle = e^{\tilde{x}(s)}\tilde{x}'(s) = \frac{d}{ds} \left( e^{\tilde{x}(s)} \right). $$
For $\tilde{\kappa}_t(s)$ to remain identically $1$, we would need $e^{\tilde{x}(s)} = s + c$ for some constant $c$, which implies $\tilde{x}(s) = \log(s+c)$. However,  we have
$$ \tilde{x}(s) = \frac{\sqrt{2}}{2}\left( \log s - \sqrt{s^2 - 1} + \arctan\left(\sqrt{s^2 - 1}\right) \right), $$
which shows that $\tilde{x}(s) \not= \log(s+c)$. 
\end{remark}

It is natural to ask under what conditions the new   tangential curvature $\kappa_t$ coincides with  the Frenet curvature $\kappa$ of a plane curve. 
The following result addresses the question whether coincide along the curve.

\begin{corollary} Under the same hypothesis of Theorem \ref{t32},  we have  $\kappa_t=\kappa$   if and only if 
\begin{equation}\label{ii}
(m+n)\circ\gamma(s) = \frac{\kappa(s)}{\langle \W, T(s) \rangle},
\end{equation}
for all $s \in I$ where $\langle \W, T(s) \rangle \neq 0$. Alternatively, if $\theta(s)$ denotes the angle between   $\W$ and the tangent vector $T(s)$, this condition holds if and only if
\begin{equation} \label{ii2}
(m+n)\circ\gamma(s) = \frac{\theta'(s)}{\cos\theta(s)}.
\end{equation}
\end{corollary}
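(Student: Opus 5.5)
The plan is to work directly from Definition~\ref{def12}, since both characterizations reduce to algebraic rewritings of \eqref{d1}. First, by \eqref{d1} the identity $\kappa_t=\kappa$ along $\gamma$ reads
\begin{equation*}
(m+n)(\gamma(s))\,\langle \W,T(s)\rangle=\kappa(s)\qquad\text{for all } s\in I .
\end{equation*}
On the open set of parameters where $\langle \W,T(s)\rangle\neq 0$, I would divide by $\langle \W,T(s)\rangle$ to obtain \eqref{ii}. Conversely, multiplying \eqref{ii} back by $\langle \W,T\rangle$ gives $\kappa_t=\kappa$ on that set. I would add a remark on the complementary set, where $T\perp \W$. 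There $\kappa_t=0$, so the equality $\kappa_t=\kappa$ forces $\kappa=0$ at those points. This is the only place where some care is needed, and it explains why the statement is restricted to parameters with $\langle \W,T\rangle\neq0$. If that zero set has interior, then on each such interval $T$ is constant, and $\gamma$ is the straight line described in the earlier proposition characterizing $\kappa_t\equiv 0$.

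For \eqref{ii2}, I would use the angle function already introduced in the proof of Theorem~\ref{t32}. Write $T(s)=(\cos\alpha(s),\sin\alpha(s))$ with $\alpha'=\kappa$, the standard Frenet relation, and take $\theta(s)=\alpha(s)-\theta_0$ as the oriented angle from $\W$ to $T(s)$. Then $\langle \W,T\rangle=\cos(\alpha-\theta_0)=\cos\theta$, exactly as computed in the proof of Theorem~\ref{t32}. Moreover, $\theta'=\alpha'=\kappa$ because $\theta_0$ is constant. Substituting these two identities into \eqref{ii} gives \eqref{ii2} at once, and the equivalence is preserved in both directions.

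The main point to state explicitly is the convention for $\theta$. It must be the signed, smooth angle function (a continuous lift), not the unsigned angle in $[0,\pi]$. With the unsigned angle, $\theta'$ could differ from $\kappa$ by a sign, or fail to be differentiable where $T=\pm\W$. With the signed lift, $\theta'=\kappa$ holds globally on $I$ and the argument is a direct computation. No use of the existence part of Theorem~\ref{t32} is required beyond the hypothesis that $\nabla$ is non-metric, which is not even essential for the identity itself.
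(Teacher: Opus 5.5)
Your proposal is correct and follows the paper's own argument: rewrite $\kappa_t=\kappa$ via \eqref{d1}, divide by $\langle \W,T\rangle$ where it is nonzero, and then substitute $\langle \W,T\rangle=\cos\theta$ and $\theta'=\kappa$ for the signed relative angle $\theta=\alpha-\theta_0$. Your discussion of the zero set of $\langle \W,T\rangle$ and your insistence on a smooth signed lift of $\theta$ are refinements the paper leaves implicit. The equality $\kappa_t=\kappa$ extends to that zero set by your straight-line observation on its interior and by continuity at its remaining points.
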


\begin{proof}
 Identity \eqref{ii} is a consequence of setting $\kappa_t  = \kappa $ and the definition of $\kappa_t$ in \eqref{d1}. Moreover, if $\cos\theta(s)=\langle\W,T(s)\rangle$, we know that $\theta'=\kappa$, whence \eqref{ii} implies \eqref{ii2}. 
\end{proof}

We know that circles are the curves with non-zero constant curvature $\kappa$. In the following result, we ask when these circles have constant tangential curvature.

\begin{proposition}
Let $\nabla$ be a non-metric cl-connection determined by $(\W,m,n)$, where $\W = (\cos\theta_{0},\sin\theta_{0})$ for
 some $\theta_0 \in  \r$.  A circle $\gamma$ of radius $R>0$ centered at the origin has a constant tangential curvature $\kappa_t = c$ (with $c \neq 0$) if and only if the functions $m$ and $n$ satisfy the following relation along the trace of the circle:
\begin{equation}\label{e11}
m(x,y) + n(x,y) = \frac{cR}{\pm\sqrt{R^2 - (x\cos\theta_0 + y\sin\theta_0)^2}},
\end{equation}
where $|x\cos\theta_0 + y\sin\theta_0| < R$.
\end{proposition}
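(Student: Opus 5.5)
We use the definition \eqref{d1} of the tangential curvature together with an explicit arc-length parametrization of the circle. The condition $\kappa_t=c$ then becomes a pointwise condition on $(m+n)$ along the trace.

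First, parametrize the circle of radius $R$ centered at the origin by $\gamma(s)=R(\cos(s/R),\sin(s/R))$. Its unit tangent is $T(s)=(-\sin(s/R),\cos(s/R))$; the opposite orientation only changes the sign of $T$, and this will be absorbed into the $\pm$. At the point $(x,y)=\gamma(s)$ we have
$$\langle \W,T\rangle=-\cos\theta_0\sin(s/R)+\sin\theta_0\cos(s/R)=\frac{-x\sin\theta_0\cdot 0 - y\cos\theta_0+x\sin\theta_0}{R},$$
that is, $\langle \W,T\rangle=(x\sin\theta_0-y\cos\theta_0)/R$.

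Next, we rewrite this in terms of the expression appearing in \eqref{e11}. Since $T$ and $\W$ are unit vectors and $N=\mp\gamma/R$ along a centered circle, we have $\langle \W,N\rangle^2=(x\cos\theta_0+y\sin\theta_0)^2/R^2$. Combined with $\langle\W,T\rangle^2+\langle\W,N\rangle^2=1$, this gives
$$\langle \W,T\rangle=\pm\frac{\sqrt{R^2-(x\cos\theta_0+y\sin\theta_0)^2}}{R}.$$
One can also see this directly from $(x\sin\theta_0-y\cos\theta_0)^2+(x\cos\theta_0+y\sin\theta_0)^2=x^2+y^2=R^2$. The sign is locally constant wherever $\langle\W,T\rangle\neq0$.

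Finally, by \eqref{d1}, $\kappa_t=c$ is equivalent to $(m+n)\langle\W,T\rangle=c$. Because $c\neq0$, this forces $\langle \W,T\rangle\neq0$, which is exactly the strict inequality $|x\cos\theta_0+y\sin\theta_0|<R$. Dividing by $\langle \W,T\rangle$ gives \eqref{e11}. Conversely, if \eqref{e11} holds along the trace, multiplying by $\langle\W,T\rangle$ gives $\kappa_t=c$, with the sign convention matching the orientation.

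The only delicate point is the bookkeeping of signs. The $\pm$ in \eqref{e11} must be read as the sign of $\langle\W,T\rangle$, which depends on the orientation and on the arc of the circle. We also note that the two points where $x\cos\theta_0+y\sin\theta_0=\pm R$ are excluded, since there $\langle\W,T\rangle=0$ and $\kappa_t$ would vanish. Hence the statement holds on arcs of the circle avoiding these two points.
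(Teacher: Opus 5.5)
Your proof is correct and follows essentially the same route as the paper: parametrize the circle by arc length, write $\langle \W,T\rangle=\pm\sqrt{R^2-u^2}/R$ with $u=x\cos\theta_0+y\sin\theta_0$, and divide $(m+n)\langle\W,T\rangle=c$ through. The paper obtains that formula from $u(s)=R\cos(s/R-\theta_0)$ and $\langle\W,T\rangle=u'(s)$, while you use $\langle\W,T\rangle^2+\langle\W,N\rangle^2=1$. Your extra remarks are more precise than the paper: that the $\pm$ is the sign of $\langle\W,T\rangle$ (fixed by orientation and arc), and that the statement only makes sense on arcs avoiding the two points $u=\pm R$. One cosmetic slip: the intermediate fraction in your first display is garbled, though the resulting $\langle\W,T\rangle=(x\sin\theta_0-y\cos\theta_0)/R$ is correct.
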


\begin{proof}
Let $\gamma(s)$ be a circle of radius $R$ which, after a translation, can be parametrized by   $\gamma(s) = R ( \cos(s/R),  \sin(s/R) )$. Define   $u(x,y) = x\cos\theta_0 + y\sin\theta_0$. Evaluating this function along the trace of the curve $\gamma(s)$ yields
$$ u(s) =   R\cos\left(\frac{s}{R} - \theta_0\right). $$
Moreover, $ \langle \W, T(s) \rangle  = -\sin\left(\frac{s}{R} - \theta_0\right)$.  
Thus,   $\langle \W, T(s) \rangle = u'(s)$. This gives
$$ \langle \W, T(s) \rangle = \pm\sqrt{1 - \cos^2\left(\frac{s}{R} - \theta_0\right)} = \pm\sqrt{1 - \frac{u(s)^2}{R^2}}. $$
From the definition \eqref{d1} of $\kappa_t$, we have   $\kappa_t(s) = (m+n)\circ\gamma(s) \langle \W, T(s) \rangle$. Imposing the condition that $\kappa_t = c$ and substituting, we obtain
$$ c =(m+n)\circ\gamma(s) \left( \pm\frac{1}{R}\sqrt{R^2 -  u(s)^2 } \right). $$
Solving for the sum of the functions $m,n$ and reverting the curve parameter back to the ambient spatial coordinates $(x,y)$ via $u = x\cos\theta_0 + y\sin\theta_0$, we  arrive at \eqref{e11}. The strict inequality restriction ensures that the denominator is non-zero, completing the proof.
\end{proof}

\section{Geodesic curvature of plane curves}\label{s4}

In this section, we investigate the  geodesic curvature $\kappa_g$ induced by a cl-connection. As in the previous section, this study is   divided into three parts: 
 first, we classify the geodesics of the connection; second, we establish the existence result with a general prescribed geodesic 
 curvature function $k(s)$; and finally, we find the  curves with constant geodesic curvature. We begin by classifying the geodesics of 
 $\nabla$.

\begin{theorem} \label{geo-cl}
Let $\nabla$ be a cl-connection on $\mathbb{R}^{2}$ determined by $(\W,m,n)$. A smooth curve $\gamma$ parametrized by arc length is a geodesic with respect to $\nabla$ if and only if one of the following conditions holds:
\begin{enumerate}
    \item[(i)] $\gamma$ is a straight line parallel to the vector field $\W$.
    \item[(ii)] $\nabla$ is a metric connection ($m + n = 0$) and  the Frenet curvature of $\gamma$ holds $\kappa=m\langle \W, N \rangle$.
\end{enumerate}
\end{theorem}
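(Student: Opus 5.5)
Since $\kappa_g\ge 0$ is defined through the sum of two squares in \eqref{d2}, the plan is to turn the condition $\kappa_g=0$ into a pair of pointwise linear equations and then analyze them. Writing $\nu(s)=\langle \W,N(s)\rangle$, the condition $\kappa_g\equiv 0$ along $\gamma$ is equivalent to the system
$$\kappa+n\nu=0,\qquad m\nu-\kappa=0$$
holding for every $s\in I$, with $m,n$ evaluated along $\gamma$. Adding the two equations gives $(m+n)\nu=0$. Conversely, the system is equivalent to the two conditions
$$\kappa=m\nu,\qquad (m+n)\nu=0.$$
So the first step is to record that $\gamma$ is a $\nabla$-geodesic if and only if these two identities hold along $\gamma$.

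Next I split according to the zero set $Z=\{s\in I:\nu(s)=0\}$. At points of $Z$ the first identity forces $\kappa=0$. On the interior of $Z$, $N$ is orthogonal to the constant vector $\W$, so $T=\pm\W$ is constant there, consistent with $\kappa=0$.

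If $\nu\equiv 0$ on all of $I$, then $T$ is constantly $\pm\W$, and $\gamma$ is a straight line parallel to $\W$. This is case (i).

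Otherwise the open set $I\setminus Z$ is nonempty, and on it the identity $(m+n)\nu=0$ gives $m+n=0$ along that portion of $\gamma$. Moreover, by the first identity, $\kappa=m\langle\W,N\rangle$ holds on all of $I$ (on $Z$ both sides vanish). Under the paper's standing dichotomy, the connection is either metric ($m+n\equiv0$) or non-metric ($m+n$ nowhere zero). The vanishing of $m+n$ along a nondegenerate arc of $\gamma$ is meant to exclude the non-metric alternative, which places us in case (ii).

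The converse is a direct substitution. In case (i) we have $T=\pm\W$, so $\nu=0$ and $\kappa=0$, and both brackets in \eqref{d2} vanish. In case (ii) we use $n=-m$ and $\kappa=m\nu$: the first bracket is $\kappa+n\nu=\kappa-m\nu=0$ and the second is $m\nu-\kappa=0$, so again $\kappa_g=0$.

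The main obstacle is the passage from ``$m+n=0$ along an arc of $\gamma$'' to ``$\nabla$ is metric''. Strictly speaking this step is not a deduction: with the paper's convention, a non-metric connection only requires that $m+n$ not vanish on any open subset of $\r^2$. Such a connection can still have $m+n$ vanishing along a curve, as in the example $m+n=x+y$ vanishing on $y=-x$. I would handle this by reading (ii) as the condition that $m+n$ vanishes along $\gamma$ together with $\kappa=m\langle\W,N\rangle$, which coincides with the stated metric case whenever $m+n\equiv0$. If the intended reading is that $\nabla$ itself must be metric, the argument needs an additional hypothesis on $m+n$, or an appeal to the dichotomy as an assumption rather than a consequence; I would flag this explicitly.

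A smaller point also needs checking: a curve whose $\nu$ vanishes on some intervals and not on others. The identity $\kappa=m\nu$ holds globally in that situation, so it still falls under (ii) whenever $m+n$ vanishes on the complementary arcs.
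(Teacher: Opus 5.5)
Your argument is the paper's: reduce $\kappa_g=0$ to the two linear equations, add them to get $(m+n)\langle\W,N\rangle=0$, split on whether $\langle\W,N\rangle$ vanishes, and check the converse by substitution.

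The obstacle you flag is genuine. It is a defect of the statement, not of your reasoning. The paper's proof goes from $(m+n)\langle\W,N\rangle=0$ and $\langle\W,N\rangle\neq0$ straight to ``$\nabla$ is a semi-symmetric metric connection'', although only $(m+n)\circ\gamma=0$ is available. The `only if' direction is in fact false as written. Take $\W=\partial_x$, $m=n=x$ (so $m+n=2x$ vanishes on no open set and $\nabla$ is not metric) and $\gamma(s)=(0,s)$. Then $\kappa=0$ and $m=n=0$ along $\gamma$, so both brackets in \eqref{d2} vanish, and $\gamma$ is a $\nabla$-geodesic orthogonal to $\W$. So your version of (ii), namely $(m+n)\circ\gamma\equiv0$ and $\kappa=m\langle\W,N\rangle$, is the right one. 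The literal statement needs the extra hypothesis that $m+n$ is either identically zero or zero-free.

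The one loose end is your ``smaller point'', which your phrasing does not settle. Under your reading, a curve that was a segment parallel to $\W$ on one subinterval and bent elsewhere would fit neither (i) nor (ii) if $m+n$ did not vanish on that segment. The paper's case split ignores this too.

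It closes by ODE uniqueness. Since $\kappa=m\langle\W,N\rangle$ holds on all of $I$, writing $T=(\cos\theta,\sin\theta)$, $N=(-\sin\theta,\cos\theta)$ gives the system $x'=\cos\theta$, $y'=\sin\theta$, $\theta'=m(x,y)\sin(\theta_0-\theta)$, with smooth right-hand side. Suppose $\langle\W,N\rangle=0$ at some $s_1$. Then $\theta(s_1)\in\theta_0+\pi\mathbb{Z}$, and the straight line through $\gamma(s_1)$ with constant angle $\theta(s_1)$ solves the same system with the same data at $s_1$. So $\gamma$ coincides with that line on all of $I$. Hence either $\langle\W,N\rangle\equiv0$, giving (i), or $\langle\W,N\rangle$ never vanishes and $m+n=0$ at every point of $\gamma$, giving your (ii).
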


\begin{proof}
From \eqref{d2},   $\gamma$ is a geodesic if and only  if
\begin{align}
\kappa + n\langle \W, N \rangle &= 0, \label{geo1} \\
m\langle \W, N \rangle - \kappa &= 0. \label{geo2}
\end{align}
Adding equations \eqref{geo1} and \eqref{geo2} yields  
\begin{equation}\label{gc}
(m + n)\langle \W, N \rangle = 0.
\end{equation}
\begin{enumerate}
    \item Suppose $\langle \W, N \rangle = 0$ along an interval. Then,   $\langle \W, T \rangle = \pm 1$. Since $\W$ is  constant, the tangent direction of $\gamma$ is fixed, meaning $\gamma$ is a straight line parallel to $\W$. Substituting $\langle \W, N \rangle = 0$ into either \eqref{geo1} or \eqref{geo2}  yields $\kappa = 0$. This gives that   $\gamma$ is a straight line.
    
    \item Suppose $\langle \W, N \rangle \neq 0$ along $\gamma$. Then, from  \eqref{gc}, we must have $m + n = 0$, which implies that $\nabla$ is a semi-symmetric metric connection. Under this condition, equations \eqref{geo1} and \eqref{geo2}  yield $\kappa = -n\langle \W, N \rangle = m\langle \W, N \rangle$.  
\end{enumerate}
Conversely,   if condition (i) or   (ii) holds, then the left-hand sides of \eqref{geo1} and \eqref{geo2} vanish identically, ensuring that $\kappa_g = 0$. This completes the proof.
\end{proof}

  In the case of a metric connection, a direct example is the following.

\begin{example}
Let $\nabla$ be a metric cl-connection on $\r^2$ determined by $(\W,m,n)$, where $\W = \partial_x = (1,0)$, $m(x,y) =-\frac1x$, $x\neq0$, and $n=-m$. Consider the circle $\gamma(s)=2(\cos(\frac{s}{2}),\sin(\frac{s}{2}))$. Since $N(s)=-(\cos(\frac{s}{2}),\sin(\frac{s}{2}))$ and $\kappa=1/2$, it follows that $\kappa(s)=(m\circ \gamma)(s)\langle \W,N(s)\rangle$. Therefore, by item (ii) of Theorem \ref{geo-cl}, this circle is a geodesic of $\r^2$ with respect to $\nabla$.
\end{example}

Next, we establish the existence and uniqueness theorem  for curves whose geodesic curvature is prescribed by a generic smooth function $k: I \to \r^+$.  

\begin{theorem}\label{t42} Let  $k: I \to \r^+$ be a smooth function. Let $\nabla$ be a cl-connection on $\mathbb{R}^{2}$ determined by $(\W,m,n)$. If $\gamma$ is a smooth curve parametrized by arc length whose  geodesic curvature is $ k(s) > 0$, then its Frenet curvature $\kappa$ satisfies 
\begin{equation}\label{eq}
2\kappa^2 - 2\langle \W, N \rangle(m - n)\kappa + (m^2 + n^2)\langle \W, N \rangle^2 - k(s)^2 = 0.
\end{equation}
In particular, $\kappa$ can be  expressed as
\begin{equation}\label{eq2}
\kappa = \frac{\langle \W, N \rangle(m - n) \pm \sqrt{2k(s)^2 - \langle \W, N \rangle^2(m + n)^2}}{2},
\end{equation}
provided that  
\begin{equation}\label{ed}
2k(s)^2 \ge \langle \W, N \rangle^2(m + n)^2.
\end{equation}
\end{theorem}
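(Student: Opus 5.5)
The plan is to expand Definition \ref{def12} directly and then solve the resulting quadratic in $\kappa$. Throughout, write $a=\langle \W,N\rangle$. By \eqref{d2}, the hypothesis that the geodesic curvature equals $k(s)$ reads
$$k^2=(\kappa+na)^2+(ma-\kappa)^2 .$$
Expanding gives
$$k^2 = 2\kappa^2+2na\kappa-2ma\kappa+(m^2+n^2)a^2 .$$
Moving $k^2$ to the left-hand side yields exactly \eqref{eq}. Here $m,n$ are evaluated along $\gamma$, so everything is a function of $s$.

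Next, I treat \eqref{eq} pointwise as a quadratic $2\kappa^2+B\kappa+C=0$, with $B=-2a(m-n)$ and $C=(m^2+n^2)a^2-k^2$. The discriminant is
$$B^2-8C=4a^2(m-n)^2-8(m^2+n^2)a^2+8k^2 .$$
Using $(m-n)^2-2(m^2+n^2)=-(m+n)^2$, this simplifies to
$$B^2-8C=4\bigl(2k^2-a^2(m+n)^2\bigr).$$
The quadratic formula then gives
$$\kappa=\frac{2a(m-n)\pm 2\sqrt{2k^2-a^2(m+n)^2}}{4},$$
which is \eqref{eq2}. A real solution exists precisely when the discriminant is nonnegative, which is condition \eqref{ed}. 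This also shows that \eqref{ed} is automatically satisfied for any actual curve whose geodesic curvature is $k$, since $\kappa$ is real.

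The only step needing care is the discriminant simplification, which is a one-line identity. I would also remark that the sign choice in \eqref{eq2} may a priori vary with $s$. However, by continuity of $\kappa$ it is locally fixed wherever \eqref{ed} holds strictly. No genuine obstacle is expected; the statement is purely algebraic.
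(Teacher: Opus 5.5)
Your proposal is correct and follows the paper's proof essentially step for step. You expand \eqref{d2} into the quadratic \eqref{eq}, reduce the discriminant to $2k^2-\langle \W,N\rangle^2(m+n)^2$, and apply the quadratic formula. Your extra observation that \eqref{ed} holds automatically for any actual curve, since $\kappa$ is a real root, is a correct sharpening of the paper's ``provided that'' clause.
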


\begin{proof}
The condition $\kappa_g(s) = k(s)$ is equivalent to
\begin{align*}
k(s)^2 &= (\kappa + n\langle \W, N \rangle)^2 + (m\langle \W, N \rangle - \kappa)^2 \\
&= \kappa^2 + 2n\langle \W, N \rangle\kappa + n^2\langle \W, N \rangle^2 + m^2\langle \W, N \rangle^2 - 2m\langle \W, N \rangle\kappa + \kappa^2.
\end{align*}
This identity reduces   to \eqref{eq}. This is a quadratic equation in $\kappa$ of the standard form $A\kappa^2 + B\kappa + C = 0$, where the coefficients are given by
$$ A = 2, \quad B = -2\langle \W, N \rangle(m - n), \quad C = (m^2 + n^2)\langle \W, N \rangle^2 - k(s)^2. $$
The   discriminant $\Delta=B^2-4AC$ is given by
\begin{align*}
\frac{\Delta}{4} &= \left[ \langle \W, N \rangle(m - n) \right]^2 - 2 \left[ (m^2 + n^2)\langle \W, N \rangle^2 - k(s)^2 \right] \\ 
&= 2k(s)^2 - \langle \W, N \rangle^2(m + n)^2.
\end{align*}
This yields \eqref{eq2}. The non-negativity of this   discriminant is equivalent to \eqref{ed}, completing the proof.
\end{proof}

 \begin{remark}\label{r43}
Equation \eqref{eq2} can be  interpreted as a non-linear first-order ordinary differential equation for the angle function $\theta(s)$ of the curve. Indeed,  let $\theta(s)$ denote the angle that the unit tangent vector $T(s)$ makes with the positive $x$-axis. Then $T(s) = (\cos\theta(s), \sin\theta(s))$ and $N(s) = (-\sin\theta(s), \cos\theta(s))$. If   $\W = (\cos\theta_0, \sin\theta_0)$, then  
$$ \langle \W, T(s) \rangle = \cos(\theta(s) - \theta_0) \quad \text{and} \quad \langle \W, N(s) \rangle = \sin(\theta(s) - \theta_0). $$
Since   $\kappa = \theta'(s)$,  equation \eqref{eq2} is equivalent to
$$\theta'(s) = \frac{(m - n)\sin(\theta(s) - \theta_0) \pm \sqrt{2k(s)^2 - (m + n)^2\sin^2(\theta(s) - \theta_0)}}{2},$$
where the connection functions $m$ and $n$ are evaluated along the   trace $\gamma(s)$.  
\end{remark}

We now establish the existence of curves with a prescribed geodesic curvature. This result is implicitly contained in 
Theorem \ref{t42} because once $\kappa$ is determined from \eqref{eq} (or \eqref{eq2}), the existence of the 
curve $\gamma$ with Frenet curvature $\kappa$ is guaranteed by the fundamental theorem of curve theory.

\begin{theorem} \label{t44}
Let $k: I \to \mathbb{R}^+$ be a smooth  function with $0\in I$, $p_0\in\r^2$ and $\theta_0^*\in\r$. 
 Let $\nabla$ be a cl-connection on $\mathbb{R}^{2}$ determined by $(\W,m,n)$, where $\W = (\cos\theta_0, \sin\theta_0)$. Assume that at $s=0$, the   function $k$ satisfies the strict inequality
\begin{equation}\label{et}
2k(0)^2 > \sin^2(\theta_0^* - \theta_0)\left( m(p_0) + n(p_0) \right)^2.
\end{equation}
Then,   there exists a unique local smooth curve $\gamma: I \rightarrow \mathbb{R}^2$ parametrized by arc length such that $\gamma(0) = p_0$, $\theta(0) = \theta_0^*$, and the geodesic curvature of $\gamma$ with respect to $\nabla$ is identically $k(s)$.
\end{theorem}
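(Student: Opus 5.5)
The plan is to rewrite the prescribed-curvature condition as a first-order initial value problem for the triple $(x,y,\theta)$, where $\gamma=(x,y)$ and $\theta$ is the tangent angle as in Remark \ref{r43}. Then we apply the Picard--Lindelöf theorem. Writing $T=(\cos\theta,\sin\theta)$, Remark \ref{r43} shows that a curve has $\kappa_g=k$ if and only if $\theta'$ is one of the two roots in \eqref{eq2}, with $m,n$ evaluated at $\gamma(s)$. So we consider the system
\begin{equation*}
x'=\cos\theta,\quad y'=\sin\theta,\quad \theta'=F_\pm(s,x,y,\theta),
\end{equation*}
where
\begin{equation*}
F_\pm=\tfrac12\Big[(m-n)(x,y)\sin(\theta-\theta_0)\pm\sqrt{2k(s)^2-(m+n)^2(x,y)\sin^2(\theta-\theta_0)}\Big],
\end{equation*}
with initial data $(x,y)(0)=p_0$ and $\theta(0)=\theta_0^*$. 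We fix one sign choice; the two choices are the two branches of \eqref{eq}.

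The key step is regularity of the right-hand side near the initial point. Set $D(s,x,y,\theta)=2k(s)^2-(m+n)^2(x,y)\sin^2(\theta-\theta_0)$. This function is smooth because $k,m,n$ are smooth. By \eqref{et} we have $D(0,p_0,\theta_0^*)>0$, so by continuity $D>0$ on a neighborhood $U$ of $(0,p_0,\theta_0^*)$. On $U$ the square root is smooth, hence $F_\pm$ is smooth and in particular locally Lipschitz in $(x,y,\theta)$. Picard--Lindelöf then gives a unique smooth local solution $(x(s),y(s),\theta(s))$ on some interval around $0$. This is exactly why the strict inequality is assumed: with equality at the initial point the square root fails to be Lipschitz and uniqueness could break down.

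We then verify the claims about the resulting curve. Since $|\gamma'|=1$, the curve $\gamma=(x,y)$ is parametrized by arc length, and its Frenet curvature is $\kappa=\theta'$. Also $\langle\W,N\rangle=\sin(\theta-\theta_0)$. So $\kappa$ solves \eqref{eq}, and expanding as in the proof of Theorem \ref{t42} gives $\kappa_g^2=k^2$, hence $\kappa_g=k$ because $k>0$ and $\kappa_g\ge0$. For uniqueness, let $\tilde\gamma$ be another such curve with the same initial data. By Theorem \ref{t42} its angle satisfies $\tilde\theta'\in\{F_+,F_-\}$ pointwise. Near $s=0$ the discriminant is positive, so $F_+-F_->0$ there, and by continuity of $\tilde\theta'$ the curve stays on a single branch. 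That branch is fixed by $\tilde\theta'(0)$, and then ODE uniqueness gives $\tilde\gamma=\gamma$.

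The main obstacle is the interpretive one: uniqueness can only hold once a branch of the square root is fixed, that is, once $\kappa(0)$ is prescribed as one of the two roots. I will state this explicitly. I will also note that the existence is local, on a neighborhood of $0$ in $I$, rather than on all of $I$, because the discriminant may degenerate further along the curve.
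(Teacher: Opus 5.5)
Your proposal follows the paper's proof. You recast $\kappa_g=k$ via \eqref{eq2} as a first-order system for $(x,y,\theta)$, use the strict inequality \eqref{et} to make the radical smooth near $(0,p_0,\theta_0^*)$, and apply Picard--Lindel\"of. On uniqueness you are more careful than the paper, which keeps the $\pm$ and cites ODE uniqueness as if the right-hand side were single-valued. Since the discriminant is strictly positive at $s=0$, the two signs produce two distinct curves with the same $p_0$ and $\theta_0^*$. So uniqueness holds only after fixing the branch (equivalently, $\kappa(0)$), and your argument that a solution cannot switch branches near $s=0$ is the right way to close this.

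There is one concrete slip, which the paper's proof shares through Remark~\ref{r43}. With $N=(-\sin\theta,\cos\theta)$ one has $\langle \W,N\rangle=\sin(\theta_0-\theta)=-\sin(\theta-\theta_0)$, which is what the paper itself uses in the proof of Theorem~\ref{t46} and in the example with $m=1$, $n=-1$. With your $F_\pm$, a solution satisfies $\kappa_g^2-k^2=4(m-n)\sin(\theta-\theta_0)\,\kappa$, so the verification step fails in general. For instance, with $\W=(1,0)$, $m=1$, $n=-1$, $k=1$, your system gives $\theta'=\sin\theta\pm\tfrac{\sqrt2}{2}$, whereas $\kappa_g=1$ requires $\theta'=-\sin\theta\pm\tfrac{\sqrt2}{2}$. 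The fix is to take $F_\pm=\tfrac12\big[(m-n)\sin(\theta_0-\theta)\pm\sqrt{D}\big]$. Since $D$ involves only $\sin^2(\theta-\theta_0)$, hypothesis \eqref{et} and the rest of your argument are unaffected.
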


\begin{proof}
Let $\theta(s)$ be the angle function that the unit tangent vector $T(s)$ makes with the positive $x$-axis. As established in Remark~\ref{r43}, we know that $\langle \W, N(s) \rangle = \sin(\theta(s) - \theta_0)$. Moreover,  $\kappa(s) = \theta'(s)$.
 By substituting this  into   \eqref{eq2}, the condition $\kappa_g(s) = k(s)$ reduces to the following ODE for   $\theta(s)$:

\begin{equation}\label{es0}
\theta'(s) =  \frac{(m - n)\sin(\theta - \theta_0) \pm \sqrt{2k(s)^2 - (m + n)^2\sin^2(\theta - \theta_0)}}{2}, 
\end{equation}
where the connection functions $m=m(x,y)$ and $n=n(x,y)$ are evaluated along the   trace $\gamma(s) = (x(s), y(s))$. Moreover, the coordinate functions satisfy  
\begin{align}
x'(s) &= \cos\theta(s), \label{es1} \\
y'(s) &= \sin\theta(s). \label{es2}
\end{align}
Equations \eqref{es0}, \eqref{es1}, and \eqref{es2} define a first-order autonomous system of non-linear ordinary differential equations for the  vector $(x(s), y(s), \theta(s))$ with the  initial data $x(0) = x_0$, $y(0) = y_0$, and $\theta(0) = \theta_0^*$. By hypothesis, the   condition \eqref{et} holds at $s=0$, ensuring that the argument within the radical is  positive at the origin. By the standard existence and uniqueness theorems for  ODEs,     there exists a unique local solution $(x(s), y(s), \theta(s))$ defined on a small open interval $I$ centered at $s=0$. The curve  $\gamma(s) = (x(s), y(s))$ is parametrized by arc length and   satisfies $\kappa_g(s) = k(s)$ by construction. This completes the proof. 
\end{proof}

As an immediate consequence, we can consider the case where the prescribed geodesic curvature $\kappa_g$ is a constant.

\begin{corollary} \label{constgeo}
Let $\nabla$ be a cl-connection on $\mathbb{R}^{2}$ determined by $(\W,m,n)$. If a smooth curve $\gamma$ parametrized by arc length has a constant geodesic curvature $\kappa_g = c > 0$, satisfying the restriction $2c^2 \ge \langle \W, N \rangle^2(m + n)^2$, then its Frenet curvature  $\kappa$ satisfies  
$$
\kappa = \frac{\langle \W, N \rangle(m - n) \pm \sqrt{2c^2 - \langle \W, N \rangle^2(m + n)^2}}{2}.
$$
In the particular case that the cl-connection $\nabla$ coincides with the Levi-Civita connection $\nabla^0$ of the Euclidean plane (i.e., $m = n = 0$), a curve $\gamma$ has constant geodesic curvature $\kappa_g = c > 0$ if and only if  $\kappa = \pm c$.  
\end{corollary}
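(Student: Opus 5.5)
The first assertion is a direct specialization of Theorem \ref{t42}, so the plan is to invoke that theorem with the constant function $k(s)\equiv c>0$. The hypothesis $2c^2\ge\langle \W,N\rangle^2(m+n)^2$ is exactly condition \eqref{ed} for this choice of $k$. Hence the quadratic relation \eqref{eq}, namely
$$2\kappa^2-2\langle \W,N\rangle(m-n)\kappa+(m^2+n^2)\langle \W,N\rangle^2-c^2=0,$$
has real roots. Solving it by the quadratic formula, with the discriminant already computed in the proof of Theorem \ref{t42}, gives the displayed formula for $\kappa$. No further argument is needed beyond replacing $k(s)$ by $c$.

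For the Levi-Civita case I will set $m=n=0$. Then the restriction $2c^2\ge\langle \W,N\rangle^2(m+n)^2$ reduces to $2c^2\ge 0$, which always holds, so the formula applies to every curve. The term $\langle \W,N\rangle(m-n)$ drops out, and the radicand becomes $2c^2$, which no longer depends on the curve. Hence $\kappa$ is the constant $\pm\sqrt{2c^2}/2$. Equivalently, Definition \ref{def12} gives directly $\kappa_g^2=\kappa^2+\kappa^2=2\kappa^2$, so $\kappa_g=\sqrt2\,|\kappa|$, in agreement with the Remark following \eqref{eu2}.

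For the converse direction I will start from a curve of constant Frenet curvature with the sign choice above. Substituting it into \eqref{d2} with $m=n=0$ returns a constant value of $\kappa_g$. Thus constant geodesic curvature corresponds exactly to constant Frenet curvature of the stated form, i.e.\ to circles and, in the degenerate case, lines.

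The main obstacle is bookkeeping of the normalizing constant in the final identification. The specialization yields $|\kappa|=\sqrt{2c^2}/2=c/\sqrt2$, not $c$. So when concluding ``$\kappa=\pm c$'' I must match this computed constant against the statement's $\pm c$. As the computation stands, the statement's form holds only if the constant prescribed for $\kappa_g$ is $\sqrt2\,c$ rather than $c$; otherwise the claim $\kappa=\pm c$ does not follow. I will make this identification explicit rather than silently dropping the factor. Every other step is an immediate substitution.
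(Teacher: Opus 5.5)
Your handling of the first assertion matches the paper. The paper gives no separate proof and presents the corollary as an immediate specialization of Theorem~\ref{t42} with $k\equiv c$; the displayed formula is just \eqref{eq2} with $k=c$.

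On the Levi-Civita case, your computation is right, and the discrepancy you flag is not a bookkeeping issue to reconcile. The second assertion is false as stated, so no argument can prove it, and you should say so outright rather than promise to ``match'' constants.

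With $m=n=0$, \eqref{d2} gives $\kappa_g^2=\kappa^2+(-\kappa)^2$, so $\kappa_g=\sqrt2\,|\kappa|$. The paper itself records this in the Remark following \eqref{eu2}. Hence $\kappa_g\equiv c>0$ holds if and only if $|\kappa|\equiv c/\sqrt2$. Since $\kappa$ is continuous and never vanishes on the interval, this means $\kappa\equiv c/\sqrt2$ or $\kappa\equiv -c/\sqrt2$.

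Two circles show that both directions of the stated equivalence fail:
\begin{itemize}
\item A circle of radius $\sqrt2$ has $\kappa_g=1$ but $\kappa=1/\sqrt2\neq\pm1$.
\item The unit circle has $\kappa=1$ but $\kappa_g=\sqrt2\neq1$.
\end{itemize}
Your converse step only shows that $\kappa_g$ is constant when $\kappa=\pm c$, and that constant is $\sqrt2\,c$, not $c$.

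State and prove the corrected version instead: $\kappa_g\equiv c>0$ if and only if $\kappa\equiv\pm c/\sqrt2$, that is, $\gamma$ is an arc of a circle of radius $\sqrt2/c$. Also drop the phrase about lines as a degenerate case. Lines have $\kappa_g=0$, so they are excluded by $c>0$.
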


We give two examples of curves with constant geodesic curvature $\kappa_g=1$. The examples depend on the choice of the connection functions $m$ and $n$.

\begin{example} Assume $\W=(1,0)$ and $\kappa_g=1$ identically along the curve $\gamma$.
\begin{enumerate}
\item[(i)] Let   $m = n = \frac{1}{2}$.  Under these choices,    the restriction $2c^2 \ge \langle \W, N \rangle^2(m+n)^2$ reduces to $2 \ge \langle \W, N \rangle^2$, which is trivially  satisfied since $\W$ and $N$ are unit vector fields. Substituting these parameters into Corollary \ref{constgeo}, the   Frenet curvature $\kappa$   must satisfy
$$\kappa = \pm \frac{\sqrt{2 - \langle \W, N \rangle^2}}{2},$$
or equivalently,  
$$\theta'(s) = \pm \frac{\sqrt{2 - \sin^2\theta(s)}}{2}.$$
By separating variables, this ODE  can be rewritten in terms of elliptic integrals as 
$$\int \frac{d\theta}{\sqrt{1 - \frac{1}{2}\sin^2\theta}} = \pm \frac{1}{\sqrt{2}}\int\, ds.$$
The left-hand side is the incomplete elliptic integral of the first kind, $F(\theta, k)$, with modulus $k = \frac{\sqrt{2}}{2}$. Therefore, by inverting this relation via the Jacobi amplitude function $\text{am}(u, k)$, we obtain 
$$\theta(s) = \text{am}\left(\pm \frac{s}{\sqrt{2}} + c, \, \frac{\sqrt{2}}{2}\right),$$
where $c \in\r$ is an integration constant determined by the initial orientation of the curve. 
\item[(ii)] Let  $m = 1$ and $n = -1$.  Then   the  restriction $2c^2 \ge \langle \W, N \rangle^2(m+n)^2$ reduces to $2 \ge 0$, which is always satisfied. Since $\langle \W, N \rangle = -\sin\theta(s)$,  and $\kappa = \theta'(s)$, then 
$$\theta'(s) = -\sin\theta(s) \pm \frac{\sqrt{2}}{2}.$$
Separation of variables leads to 
$$\theta(s) = 2 \arctan\left( \sqrt{2} \pm \tan\left( \frac{\sqrt{2}}{4}(s + c) \right) \right),$$
where $c \in \mathbb{R}$ is an integration constant. Next, we compute the parametrization of the curve $\gamma$. For simplicity, we assume that $c=0$ and choose the positive sign in the expression for $\theta$. Using the identities
$$
\cos(2\arctan v)=\frac{1-v^2}{1+v^2}, \quad \sin(2\arctan v)=\frac{2v}{1+v^2},
$$
we have
\begin{equation}\label{kg=1}
\gamma(s)
=
\left(
\int_0^s
\frac{1-v(u)^2}
     {1+v(u)^2}
\,du,
\int_0^s
\frac{2v(u)}
     {1+v(u)^2}
\,du
\right), \quad v(u):=\sqrt{2}+\tan(\frac{\sqrt{2}}{4} u).
\end{equation}
The resulting curve is illustrated in Fig. \ref{fig2}.
\end{enumerate}
\end{example}

\begin{figure}[ht]
\centering
\includegraphics[width=0.3\textwidth]{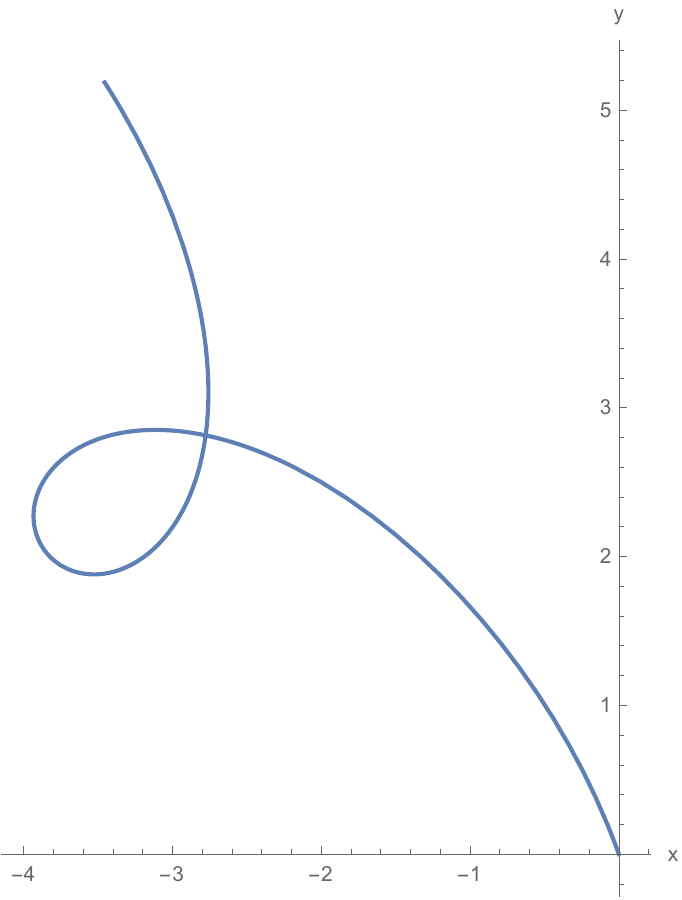}
\caption{A curve with constant geodesic curvature $\kappa_g(s)=1$, defined by \eqref{kg=1} for $s\in[0,10]$.}
\label{fig2}
\end{figure}

We finish the section by giving a geometric interpretation of curvatures via relative angles. In the classical Euclidean planar geometry, the Frenet curvature $\kappa$ is the derivative of the tangent angle with respect to a fixed coordinate axis of $\mathbb{R}^2$. In this setting, we analyze  how the unit tangent vector $T$ varies relative to the reference field $\W$.

Let $\W = (\cos\theta_0, \sin\theta_0)$ be the prescribed constant unit vector field defining the cl-connection $\nabla$. Let $\gamma: I \rightarrow \mathbb{R}^2$ be a smooth curve parametrized by arc length, and let $\theta(s)$ denote the angle function that the unit tangent vector $T(s)$ makes with the positive $x$-axis. Then we have:
$$ T(s) = (\cos\theta(s), \sin\theta(s)), \quad N(s) = (-\sin\theta(s), \cos\theta(s)). $$
We introduce the \textit{relative angle function} $\psi(s)$, defined as the angle from the constant field $\W$ to the unit tangent vector field $T(s)$ along $\gamma$, which is given by
\begin{equation}\label{psi}
\psi(s) = \theta(s) - \theta_0.
\end{equation}

The following theorem establishes a relationship between the  curvatures and the relative angle function.

\begin{theorem} \label{t46}
Let $\nabla$ be a cl-connection on $\mathbb{R}^2$ determined by $(\W,m,n)$. Let $\gamma$ be a smooth curve parametrized by arc length, and let $\psi(s)$ be its relative angle function with respect to $\W$. Then:
\begin{align}
    \kappa_t(s) &= (m+n)\cos\psi(s), \label{k1} \\
    \kappa_g(s) &= \sqrt{ (\psi'(s) - n\sin\psi(s))^2 + (\psi'(s) + m\sin\psi(s))^2 }, \label{k2}
\end{align}
where $m=m\circ\gamma$ and $n=n\circ\gamma$.  
\end{theorem}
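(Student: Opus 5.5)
The plan is a direct substitution into Definition~\ref{def12}, written in terms of the angle functions $\theta(s)$ and $\psi(s)=\theta(s)-\theta_0$ from \eqref{psi}.

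First I will express all the ingredients in terms of $\psi$. Since $\W=(\cos\theta_0,\sin\theta_0)$, $T=(\cos\theta,\sin\theta)$ and $N=(-\sin\theta,\cos\theta)$, the angle-difference formulas give
$$\langle \W,T\rangle=\cos\theta_0\cos\theta+\sin\theta_0\sin\theta=\cos(\theta-\theta_0)=\cos\psi,$$
$$\langle \W,N\rangle=-\cos\theta_0\sin\theta+\sin\theta_0\cos\theta=\sin(\theta_0-\theta)=-\sin\psi.$$
The second identity is the one point needing care. The sign here is opposite to the expression $\sin(\theta-\theta_0)$ written in Remark~\ref{r43}, so I will recompute it explicitly rather than quote that remark. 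Since $\theta_0$ is constant, the Frenet curvature is $\kappa=\theta'=\psi'$.

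Next I substitute these into \eqref{d1}. This gives $\kappa_t=(m+n)\langle\W,T\rangle=(m+n)\cos\psi$, which is \eqref{k1}, with $m,n$ evaluated along $\gamma$.

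Then I substitute into \eqref{d2}. The first term becomes $\kappa+n\langle\W,N\rangle=\psi'-n\sin\psi$. The second becomes $m\langle\W,N\rangle-\kappa=-m\sin\psi-\psi'=-(\psi'+m\sin\psi)$, and squaring removes the overall minus sign. Since $\kappa_g\ge 0$ by definition, taking the nonnegative square root yields \eqref{k2}.

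The only real obstacle is the orientation and sign bookkeeping for $\langle \W,N\rangle$, which I have settled by the explicit computation above. Everything else is routine.
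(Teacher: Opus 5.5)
Your proposal is correct and matches the paper's proof: compute $\langle \W,T\rangle=\cos\psi$, $\langle \W,N\rangle=-\sin\psi$ and $\kappa=\psi'$, then substitute into \eqref{d1} and \eqref{d2}. You were right to recompute the sign rather than quote Remark~\ref{r43}: the paper's own proof also uses $\langle \W,N\rangle=-\sin\psi$, which contradicts the $\sin(\theta-\theta_0)$ written in that remark.
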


\begin{proof}
Since $\W = (\cos\theta_0, \sin\theta_0)$, we have $\langle \W, T(s) \rangle  = \cos\psi(s)$ and $\langle \W, N(s) \rangle  = -\sin\psi(s)$. From \eqref{d1}, the relation   $\kappa_t = (m+n)\langle \W, T \rangle$ yields \eqref{k1}.

To prove \eqref{k2}, we differentiate the relative angle \eqref{psi} with respect to the arc length parameter $s$ to obtain  $ \psi'(s) = \theta'(s) = \kappa(s)$.  Substituting $\kappa = \psi'$ and $\langle \W, N \rangle = -\sin\psi(s)$ into \eqref{d2},  we arrive at  
\begin{equation*}
\begin{split}
\kappa_g^2 &= (\kappa + n\langle \W, N \rangle)^2 + (m\langle \W, N \rangle - \kappa)^2\\
&= (\psi' - n\sin\psi)^2 + (\psi' + m\sin\psi)^2.
\end{split}
\end{equation*} 
\end{proof}

\begin{remark} 
This result shows  that the tangential curvature $\kappa_t$ acts as a directional projection of the connection due to the factor $\cos\psi$. It vanishes   when the curve moves   orthogonally to the    vector field $\W$ (i.e., $\psi = \pm \pi/2$), regardless of how sharply the curve is turning. 

\end{remark}

 
\section{Modified $\nabla$-spirals}\label{s5}

In the classical differential geometry of plane curves, spirals are curves  characterized by a strict monotonic relation between
 their   curvature and their arc length parameter. Two of the most important examples are  clothoids (also known as the 
 Euler or Cornu spirals), where the curvature varies linearly as $\kappa(s) =  s$, and     logarithmic spirals, where 
 $\kappa(s)=1/s$. The parametrization of   clothoids  involves the 
 Fresnel integrals, whereas for   logarithmics spirals, the parametrizations are   given in terms of   logarithmic  and trigonometric functions.     

In our setting, we  generalize both types of curves to obtain   two new classes of   curves: the \textit{ 
$\nabla$-clothoids} and the \textit{ logarithmic $\nabla$-spirals}. Let $\nabla$ be a 
non-metric cl-connection determined by $(\W,m,n)$, where $\W = (\cos\theta_0, \sin\theta_0)$. To simplify the discussion, we will assume in this section  that the connection functions satisfy the constancy   condition 
$$m(x,y) + n(x,y) = \lambda$$
 along the curve, where $\lambda > 0$ is a constant. Recall that   $R_{\theta_0}$ represents the   planar rotation matrix about the origin by the angle $\theta_0$.

We first study the analogue of the classical clothoid by investigating curves   whose tangential curvature is $\kappa_t(s) = s$ (see Fig. \ref{fig3}).

 \begin{theorem}\label{t51}
Let $\nabla$ be a cl-connection determined by $(\W,m,n)$, where $\W = (\cos\theta_0, \sin\theta_0)$ and $m+n = \lambda > 0$ along the curve. Let 
$\gamma: I \rightarrow \r^2$ be a smooth curve parametrized by arc length with $\kappa_t(s) = s$ for $s \in (- \lambda, \lambda)$. Then, up to a rigid translation, $\gamma(s)$ is   given by 
\begin{equation}\label{ec}
\gamma(s) = R_{\theta_0} \circ \left( \pm \frac{s^2}{2\lambda}, \, \pm \left[ \frac{s}{2}\sqrt{1 - \frac{s^2}{\lambda^2}} + \frac{\lambda}{2}\arcsin\left(\frac{s}{\lambda}\right) \right] \right).
\end{equation}
\end{theorem}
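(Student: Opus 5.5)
The plan is to specialize Theorem \ref{t32} to the prescribed function $k(s)=s$ under the assumption $m+n=\lambda$ along $\gamma$. From \eqref{d1}, the condition $\kappa_t(s)=s$ reads
$$\langle \W,T(s)\rangle=\frac{s}{\lambda}=:f(s).$$
The restriction $s\in(-\lambda,\lambda)$ is exactly what guarantees $|f|<1$, so the hypothesis $|k|\le |m+n|$ of Theorem \ref{t32} holds. The only point where that theorem is not literally applicable is $s=0$, where $k$ vanishes. However, the proof of Theorem \ref{t32} only uses $|f|\le 1$ and the non-constancy of $f$, and $f(s)=s/\lambda$ is clearly non-constant. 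So I would rerun that argument directly rather than cite the theorem verbatim.

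Concretely, I would write $T(s)=(\cos\theta(s),\sin\theta(s))$ and set $\psi=\theta-\theta_0$. The condition becomes $\cos\psi(s)=s/\lambda$. Applying $R_{-\theta_0}$ to $T$ then gives the components
$$\Bigl(\cos\psi,\ \sin\psi\Bigr)=\Bigl(\frac{s}{\lambda},\ \pm\sqrt{1-\frac{s^2}{\lambda^2}}\Bigr).$$
The sign of the second component is constant on the interval by continuity, since $\sin\psi$ does not vanish on $(-\lambda,\lambda)$. Hence, up to translation,
$$\gamma(s)=R_{\theta_0}\circ\Bigl(\int^s \frac{t}{\lambda}\,dt,\ \pm\int^s\sqrt{1-\frac{t^2}{\lambda^2}}\,dt\Bigr).$$
This is \eqref{e4} with $f=s/\lambda$. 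Uniqueness up to translation follows, as in Theorem \ref{t32}, because $T$ is determined by $f$ up to the sign choice.

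The remaining work is routine integration. The first integral is $s^2/(2\lambda)$. For the second, I would use the standard antiderivative, obtained via the substitution $t=\lambda\sin u$:
$$\int\sqrt{1-t^2/\lambda^2}\,dt=\frac{t}{2}\sqrt{1-\frac{t^2}{\lambda^2}}+\frac{\lambda}{2}\arcsin\frac{t}{\lambda}.$$
This reproduces \eqref{ec}. The $\pm$ on the first component in \eqref{ec} corresponds to the orientation ambiguity already present in \eqref{e4}. That ambiguity can be absorbed either by reversing the parameter or by allowing $\W\mapsto-\W$. I would simply remark that both signs are kept as in Theorem \ref{t32}.

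As a check, I would differentiate \eqref{ec} and verify that $|\gamma'|=1$ and $\langle \W,\gamma'\rangle=s/\lambda$, so that $\kappa_t=\lambda\cdot s/\lambda=s$.

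The main obstacle is only the bookkeeping at $s=0$, where $k$ vanishes so Theorem \ref{t32}'s "nowhere zero" hypothesis fails, together with the justification that the sign of $\sqrt{1-f^2}$ cannot switch on the open interval. Both are handled by continuity.
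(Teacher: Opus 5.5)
Your proposal is correct and follows the paper's route: specialize Theorem~\ref{t32} to $f(s)=s/\lambda$ and evaluate the two integrals. It is also more careful than the paper, which applies Theorem~\ref{t32} directly even though $k(s)=s$ vanishes at $s=0$, and which never justifies that the sign of $\sqrt{1-f^2}$ stays constant.

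One correction to your closing remark: the $\pm$ on the first component of \eqref{ec} cannot be absorbed by reversing the parameter, nor by $\W\mapsto-\W$. Since $s^2/(2\lambda)$ is even and the bracket is odd, $s\mapsto -s$ only flips the second sign; and $\W\mapsto-\W$ changes the connection (equivalently, it forces $m+n=-\lambda<0$). Your own derivation and your differentiation check show that, with $\W$ fixed and $\lambda>0$, the first sign must be $+$, because the other choice gives $\kappa_t=-s$. So the statement holds only in the weak sense that the $+$ case is among the forms it lists.
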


\begin{proof}
The result follows as a direct application of Theorem~\ref{t32}. Since  
$\kappa_t(s) = s$, and   $m+n = \lambda > 0$,     the auxiliary function is
$$ f(s) = \frac{\kappa_t(s)}{m+n} = \frac{s}{\lambda}. $$
Since $\W$ and $T$ are unit vector fields, the condition $|f(s)| \le 1$   imposes the domain restriction $|s| \le \lambda$. Substituting $f(t) = \frac{t}{\lambda}$   into   \eqref{e4}, we evaluate the two corresponding 
integrals. The first integral   yields 
$$ \int^s f(t) \, dt = \int^s \frac{t}{\lambda} \, dt = \frac{s^2}{2\lambda}. $$
The second integral is a standard trigonometric integral whose value is  
$$ \int^s \sqrt{1 - f(t)^2} \, dt = \int^s \sqrt{1 - \frac{t^2}{\lambda^2}} \, dt = \frac{s}{2}\sqrt{1 - \frac{s^2}{\lambda^2}} + \frac{\lambda}{2}\arcsin\left(\frac{s}{\lambda}\right). $$
Inserting these   integrals   into   \eqref{e4} yields  \eqref{ec}, completing the proof.
\end{proof}

\begin{remark} 
We compare the geometric behavior of clothoids and $\nabla$-clothoids.  A standard clothoid spirals infinitely toward an asymptotic center point as 
$s \to \infty$, because its curvature grows unbounded ($\kappa \to \infty$). For the modified $\nabla$-clothoid under 
a cl-connection, the curve cannot extend infinitely in terms of arc length because its parameter is bounded by    $|s| < \lambda$.
  As $s \to \pm \lambda$, equation \eqref{ec} shows that  $\kappa \to \infty$. This means the curve ends at a point   where it turns infinitely fast.

\end{remark}

\begin{remark} 
While Theorem~\ref{t51} determines the trajectory uniquely by prescribing its tangential curvature $\kappa_t(s) = s$, its geodesic curvature $\kappa_g$ is not uniquely constrained. The tangential curvature depends only on the sum $m+n = \lambda$. In contrast, the geodesic curvature evaluates in separate terms, namely $m\langle\W,N\rangle$ and $n\langle \W,N\rangle$. In the present case,  $\langle \W, N \rangle = \pm\sqrt{1 - s^2/\lambda^2}$ and  $\kappa\langle \W, N \rangle = f'(s) = 1/\lambda$. This yields   $\kappa^2 = \frac{1}{\lambda^2 - s^2}$. Thus 
$$ \kappa_g^2(s) = \frac{2}{\lambda^2 - s^2} - \frac{2(m-n)}{\lambda} + (m^2+n^2)\left(1 - \frac{s^2}{\lambda^2}\right). $$
This     formula indicates that  the  geodesic curvature   depends   on how  $\lambda$ is   distributed between $m(x,y)$ and $n(x,y)$.
\end{remark}

Next, we     generalize the logarithmic  spirals by prescribing a tangential curvature   $\kappa_t(s) = 1/s$ (see Fig. \ref{fig3}).

\begin{theorem}
Let $\nabla$ be a cl-connection on $\r^2$ with $m+n = \lambda > 0$ along the curve. Let $\gamma: I \rightarrow \r^2$ be a smooth curve parametrized by 
arc length with  $\kappa_t(s) = 1/s$ for $s > 0$. Then, the domain of definition for the curve is restricted to $s \ge 1/\lambda$, and up to rigid translations, $\gamma(s)$ is   given by 
\begin{equation}\label{e32}
\gamma(s) = R_{\theta_0} \left( \pm \frac{\log(s)}{\lambda}, \, \pm \frac{1}{\lambda}\left[ \sqrt{\lambda^2 s^2 - 1} - \arccos\left(\frac{1}{\lambda s}\right) \right] \right).
\end{equation}
\end{theorem}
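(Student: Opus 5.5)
This is a direct application of Theorem \ref{t32}, in the same way Theorem \ref{t51} was derived from it. Since $\kappa_t(s)=1/s$ is nowhere zero on $s>0$ and $m+n=\lambda>0$ is constant along the curve, the auxiliary function becomes
$$f(s)=\frac{\kappa_t(s)}{m+n}=\frac{1}{\lambda s}.$$
The constraint $|f|\le 1$ is forced by $f=\langle \W,T\rangle$ with $\W,T$ unit vectors. Equivalently it is the hypothesis $|k|\le|m+n|$ of Theorem \ref{t32}, which here reads $1/s\le\lambda$. This gives the domain restriction $s\ge 1/\lambda$.

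Next I substitute $f$ into \eqref{e4} and evaluate the two primitives. The first is immediate:
$$\int^s\frac{dt}{\lambda t}=\frac{\log s}{\lambda}.$$
For the second I write
$$\sqrt{1-f^2}=\frac{\sqrt{\lambda^2t^2-1}}{\lambda t}$$
and substitute $u=\lambda t$. This reduces the integral to $\frac1\lambda\int \frac{\sqrt{u^2-1}}{u}\,du$. With $w=\sqrt{u^2-1}$ the integrand becomes $\frac{w^2}{1+w^2}$ (in $dw$), so the primitive is
$$\sqrt{u^2-1}-\arctan\sqrt{u^2-1}.$$
For $u\ge1$ we have $\arctan\sqrt{u^2-1}=\arccos(1/u)$. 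I will verify this by differentiating $\sqrt{\lambda^2s^2-1}-\arccos(1/(\lambda s))$ directly: the derivative is $\frac{\lambda^2 s}{\sqrt{\lambda^2s^2-1}}-\frac{1}{s\sqrt{\lambda^2s^2-1}}=\frac{\sqrt{\lambda^2s^2-1}}{s}$, which after dividing by $\lambda$ equals $\sqrt{1-f(s)^2}$. This mirrors the computation in Example \ref{ex34}, which is the case $\lambda=1$.

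Inserting both primitives into \eqref{e4} yields \eqref{e32}, with integration constants absorbed into a rigid translation, and the sign ambiguities are inherited from \eqref{e4}. The only delicate point is that Theorem \ref{t32} assumes $f$ is non-constant to fix $\theta=\theta_0$. Here $f=1/(\lambda s)$ is clearly non-constant, so that step applies verbatim. At the endpoint $s=1/\lambda$ one has $f=1$, so the square root vanishes, and that endpoint is included with the formula remaining valid.

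I expect no real obstacle. The only care needed is correctly identifying the antiderivative $\arctan\sqrt{u^2-1}=\arccos(1/u)$ on $u\ge1$, which the differentiation check above settles.
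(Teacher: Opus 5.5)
Your proposal is correct and follows the paper's proof: you apply Theorem~\ref{t32} with $f=1/(\lambda s)$, read off $s\ge 1/\lambda$ from $|f|\le 1$, and evaluate the two primitives, and your differentiation check supplies the detail behind the paper's ``a simple integration yields''. One small caveat on your endpoint remark: since $|\kappa|=1/(s\sqrt{\lambda^2s^2-1})\to\infty$ as $s\to 1/\lambda^{+}$, the parametrization is only $C^1$ at $s=1/\lambda$, so that point can be attached by continuity but not as a point of a smooth curve.
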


\begin{proof}
The proof relies on a direct application of Theorem~\ref{t32}. Given the prescribed tangential curvature $\kappa_t(s) = 1/s$ 
and the  condition $m+n = \lambda > 0$, we obtain  
$$f(s) = \frac{\kappa_t(s)}{m+n} = \frac{1}{\lambda s}.$$
Recall that we require $|f(s)| \le 1$. Assuming $s > 0$, this yields  $s \ge 1/\lambda$. To determine the  parametrization, we substitute $f(t) = \frac{1}{\lambda t}$ into   \eqref{e4} and evaluate both integrals. The first integral is  
$$\int^s f(t) \, dt = \int^s \frac{1}{\lambda t} \, dt = \frac{\log(s)}{\lambda}.$$
For the second integral, a simple integration yields 
$$\int^s \sqrt{1 - f(t)^2} \, dt = \frac{1}{\lambda} \left[ \sqrt{\lambda^2 s^2 - 1} - \arccos\left(\frac{1}{\lambda s}\right) \right].$$
Inserting these   integrals back into   \eqref{e4} produces the parametrization \eqref{e32}, thereby concluding the proof.
\end{proof}
\begin{figure}[ht]
\centering
\includegraphics[width=0.2\textwidth]{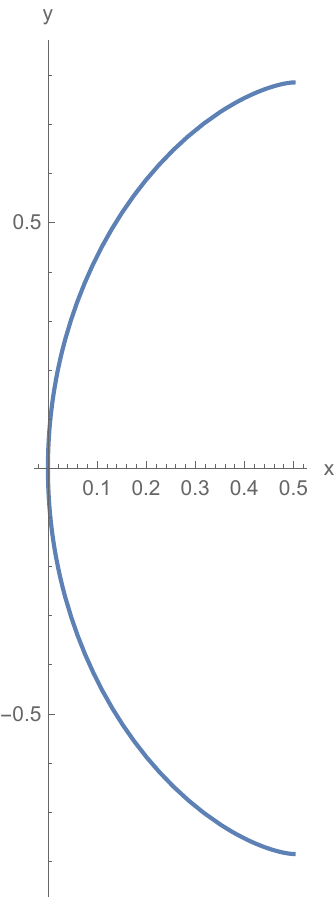} \quad \includegraphics[width=0.25\textwidth]{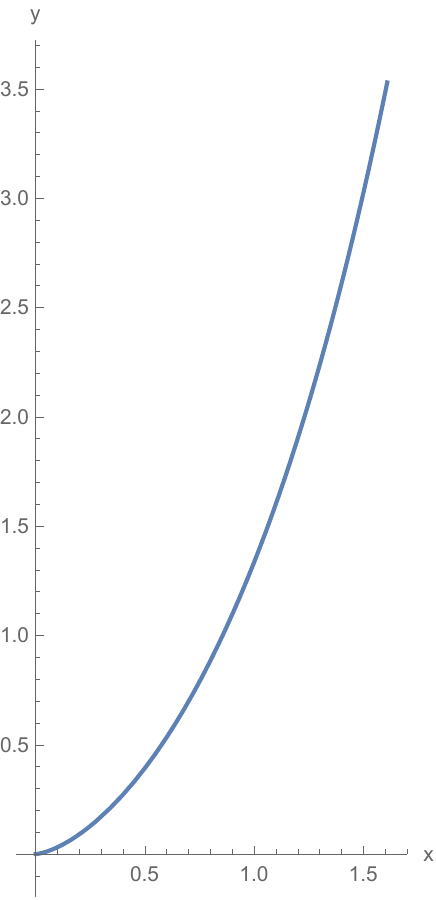}
\caption{A $\nabla$-clothoid (left) given by \eqref{ec} and a logarithmic $\nabla$-spiral (right) given by \eqref{e32}, both corresponding to $\lambda=1$ and the positive sign.}
\label{fig3}
\end{figure}
\section*{Acknowledgements}
Rafael L\'opez has been partially supported by MINECO/MICINN /FEDER grant no. PID2023-150727NB-I00,  and by the ``Mar\'{\i}a de Maeztu'' Excellence Unit IMAG, reference CEX2020-001105- M, funded by MCINN/ AEI/10.13039/ 501100011033/ CEX2020-001105-M.



\begin{thebibliography}{99}




\bibitem{ac} N. S. Agashe, M. R. Chafle, On submanifolds of a Riemannian manifold with a semi-symmetric non-metric connection. Tensor 55 (1994), 120--130.


\bibitem{fs} A. Friedmann, J. A. Schouten, \"{U}ber die Geometrie der halbsymmetrischen \"{U}bertragungen. Math. Z. 21 (1924), 211--223.

 
\bibitem{gu} \c{S}. G\"{u}ven\c{c}, Constructions of Frenet curves with respect to semi-symmetric metric connection. arXiv:2408.05880v1 [math.DG] 2024.
 
\bibitem{hay} H. Hayden, Subspaces of a space with torsion. Proc. London Math. Soc. 34 (1932), 27--50.

\bibitem{im} T. Imai, Hypersurfaces of a Riemannian manifold with semi-symmetric metric connection. Tensor (N. S.) 23 (1972), 300--306. 

\bibitem{lyl} C. W. Lee, D. W. Yoon, J. W. Lee, Optimal inequalities for the Casorati curvatures of submanifolds of real space forms endowed with semi-symmetric metric connections. J. Inequal. Appl. 2014, 327 (2014).



 
 \bibitem{mc1} A. Mihai, C. \"{O}zg\"{u}r, Chen inequalities for submanifolds of real space forms with a semi-symmetric metric connection. Taiwan. J. Math. 4 (2010), 1465--1477.

%

\bibitem{na} Z. Nakao, Submanifolds of a Riemannian manifold with semisymmetric metric connections. Proc. Amer. Math. Soc. 54 (1976), 261--266.

 

\bibitem{pre} A. Pressley, Elementary Differential Geometry, 2nd ed., Springer, Berlin/Heidelberg, 2001.

\bibitem{trip} M. M. Tripathi, A new connection in a Riemannian manifolds. Int. Electron. J. Geom. 1(1) (2008), 15-24.

\bibitem{ya} K. Yano, On semi symmetric metric connection. Rev. Roum. Math. Pures Appl. 15 (1970), 1579--1591.
 


   
 

\end{thebibliography}
\end{document}